\newcommand\CA{{\mathcal A}} 
\newcommand\CB{{\mathcal B}}
\renewcommand\CD{{\mathcal D}}
\newcommand\CIF{{\mathcal {IF}}} 
\newcommand\CRF{{\mathcal {RF}}}
\newcommand\BBC{{\mathbb C}}
\newcommand\BBZ{{\mathbb Z}}
\newcommand {\GAP}{\textsf{GAP}}  
\newcommand {\CHEVIE}{\textsf{CHEVIE}}  
\newcommand {\Singular}{\textsf{SINGULAR}}  
\newcommand {\Sage}{\textsf{SAGE}}
\newcommand\GL{\operatorname{GL}}
\newcommand\Der{\operatorname{Der}}
\newcommand\pdeg{\operatorname{pdeg}}
\renewcommand\th{{^{\text{th}}}}
\numberwithin{equation}{section}
\theoremstyle{plain}
\newtheorem{lemma}[equation]{Lemma}
\newtheorem{theorem}[equation]{Theorem}
\newtheorem{corollary}[equation]{Corollary}
\newtheorem{proposition}[equation]{Proposition}
\theoremstyle{definition}
\newtheorem{definition}[equation]{Definition}
\newtheorem{remark}[equation]{Remark}
\newtheorem{example}[equation]{Example}
\begin{document}

\subjclass[2010]{Primary 20F55, 52C35, 14N20; Secondary 13N15}

\title[On Inductively free Restrictions of Reflection Arrangements]{On inductively free Restrictions of Reflection Arrangements}


\author[N. Amend]{Nils Amend}
\address
{Fakult\"at f\"ur Mathematik,
Ruhr-Universit\"at Bochum,
D-44780 Bochum, Germany}
\email{nils.amend@rub.de}

\author[T. Hoge]{Torsten Hoge}
\address
{Institut f\"ur Algebra, Zahlentheorie und Diskrete Mathematik,
Fakult\"at f\"ur Mathematik und Physik,
Leibniz Universit\"at Hannover,
Welfengarten 1,
30167 Hannover, Germany}
\email{hoge@math.uni-hannover.de}

\author[G. R\"ohrle]{Gerhard R\"ohrle}
\address
{Fakult\"at f\"ur Mathematik,
Ruhr-Universit\"at Bochum,
D-44780 Bochum, Germany}
\email{gerhard.roehrle@rub.de}

\keywords{hyperplane arrangements, 
complex reflection groups, restricted arrangements, 
inductively free arrangements}

\begin{abstract}  
Let $W$ be a finite complex reflection group acting on the 
complex vector space $V$ and let $\CA(W) = (\CA(W), V)$ be 
the associated reflection arrangement. 
In \cite{hogeroehrle:inductivelyfree}, we classified 
all inductively free reflection
arrangements $\CA(W)$. The aim of this note is to 
extend this work by determining all inductively free 
restrictions of reflection arrangements.
\end{abstract}

\maketitle
\allowdisplaybreaks


\section{Introduction}

Let $W$ be a finite complex reflection group acting 
on the complex vector space $V$ and let $\CA = (\CA(W), V)$ be the associated
hyperplane arrangement of $W$. 
In \cite[Thm.~1.1]{hogeroehrle:inductivelyfree},
we classified all inductively free reflection arrangements, see 
Theorem \ref{thm:indfree1} below. 
See Definition \ref{def:indfree} below for the notion of 
an inductively free arrangement. 
Extending this earlier work, in
this note we classify all inductively free restrictions 
$\CA^X$, for $\CA$ a reflection arrangement and $X$ in
the intersection lattice $L(\CA)$ of $\CA$, see Theorem \ref{thm:indfree}. 
If $\CA^X$ is inductively free for every $X \in L(\CA)$, 
then $\CA$ is called hereditarily inductively free, see Definition
\ref{def:heredindfree}.

First we recall the main results 
from \cite[Thms.~1.1 and 1.2]{hogeroehrle:inductivelyfree}:

\begin{theorem}
\label{thm:indfree1}
For a finite complex reflection group  $W$,  
let $\CA = \CA(W)$ be its reflection arrangement.
Then the following hold:
\begin{itemize}
\item[(i)] $\CA$ is 
inductively free if and only if 
$W$ does not admit an irreducible factor
isomorphic to a monomial group 
$G(r,r,\ell)$ for $r, \ell \ge 3$, 
$G_{24}, G_{27}, G_{29}, G_{31}, G_{33}$, or $G_{34}$.
\item[(ii)] $\CA$ is inductively free
if and only if  $\CA$ is hereditarily inductively free.
\end{itemize}
\end{theorem}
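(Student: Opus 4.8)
The plan is to combine Terao's freeness theorem for reflection arrangements with a case-by-case analysis along the Shephard--Todd classification, handling the inductively free groups and the remaining ones by different mechanisms. As preliminary reductions, recall that $\CA(W)$ is always free with $\exp\CA(W)$ equal to the multiset of coexponents of $W$ (Terao), and that a product $\CA_1\times\CA_2$ of arrangements is inductively free if and only if both factors are; hence both assertions in Theorem~\ref{thm:indfree1} reduce at once to the case of $W$ irreducible, and one runs down the Shephard--Todd list.

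\emph{The inductively free groups in} (i). If $\operatorname{rank}W\le 2$, then $\CA(W)$ is inductively free since every arrangement of rank at most $2$ is; this settles the two-dimensional exceptional groups $G_4,\ldots,G_{22}$ and the dihedral groups. If $\CA(W)=\CA(G(r,1,\ell))$ --- which, one checks, coincides as an arrangement with $\CA(G(r,p,\ell))$ for every $p$ with $1\le p<r$ --- then $\CA(W)$ is supersolvable, hence inductively free by the theorem of Jambu--Terao. For $W$ a Coxeter group (types $A$, $B$, $D$, $F_4$, $H_3$, $H_4$, $E_6$, $E_7$, $E_8$) the arrangement $\CA(W)$ is inductively free by the theorem of Barakat--Cuntz. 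The only inductively free groups not yet covered are $G_{25}$, $G_{26}$ and $G_{32}$, for which one exhibits an explicit addition--deletion chain by a (computer-assisted) search through $L(\CA)$.

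\emph{The remaining groups in} (i). A key structural input is that the class of inductively free arrangements is closed under taking localizations: if $\CA$ is inductively free and $X\in L(\CA)$, then $\CA_X=\{H\in\CA : X\subseteq H\}$ is inductively free; since $\CA_X$ is the reflection arrangement $\CA(W_X)$ of the parabolic subgroup $W_X$ (a consequence of Steinberg's theorem), it suffices to exhibit in each forbidden $W$ a parabolic subgroup whose reflection arrangement fails to be inductively free. For the monomial family this collapses to a single rank-$3$ family: $G(r,r,3)$ is a parabolic subgroup of $G(r,r,\ell)$ for every $\ell\ge3$; one computes $\exp\CA(G(r,r,3))=\{1,\,r+1,\,2r-2\}$; and, since the reflections of $G(r,r,3)$ form a single orbit, each of its (rank-$2$, hence automatically free) restrictions $\CA(G(r,r,3))^H$ has exponents $\{1,r\}$. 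For $r\ge3$ the multiset $\{1,r\}$ is not obtained from $\{1,r+1,2r-2\}$ by deleting a single entry, so Terao's addition--deletion theorem leaves no admissible last step; thus $\CA(G(r,r,3))$, and therefore $\CA(G(r,r,\ell))$ for all $\ell\ge 3$, is not inductively free. (For $r=2$ this obstruction disappears, matching the fact that $\CA(D_\ell)=\CA(G(2,2,\ell))$ is inductively free.) For the six exceptional groups, $\CA(G_{24})$ and $\CA(G_{27})$ (both of rank $3$) are ruled out directly by comparing their coexponents with the exponents of their restrictions, while $G_{29}$, $G_{31}$, $G_{33}$, $G_{34}$ are handled either by reduction, via the localization principle, to a parabolic subgroup already known not to be inductively free, or --- for the smaller ones --- by the analogous direct exponent computation.

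\emph{Part} (ii), and the main obstacle. Since $\CA(W)=\CA(W)^V$, hereditary inductive freeness trivially implies inductive freeness; for the converse one must show, in view of (i), that every restriction $\CA(W)^X$ of an inductively free reflection arrangement is again inductively free. Running through the list in (i): restrictions of $\CA(G(r,1,\ell))$ are again supersolvable arrangements of Dowling type, hence inductively free; restrictions of the Coxeter arrangements of types $D_\ell$, $F_4$, $H_3$, $H_4$, $E_6$, $E_7$, $E_8$ and of $\CA(G_{25})$, $\CA(G_{26})$, $\CA(G_{32})$ are checked by going through the finitely many flats in each case. I expect the genuine difficulty to lie in the negative part of (i) for the large-rank exceptional groups $G_{33}$ and $G_{34}$: their reflection arrangements are far too large for a brute-force search over addition--deletion chains, so everything must funnel through the localization principle together with an exact determination of which parabolic subgroups of $G_{31}$, $G_{33}$, $G_{34}$ already fail to be inductively free --- with the family $\CA(G(r,r,3))$ and the small arrangements $\CA(G_{29})$, $\CA(G_{24})$, $\CA(G_{27})$ supplying the only genuinely new computational input.
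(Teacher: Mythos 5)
First, note that this paper does not prove Theorem \ref{thm:indfree1} at all: it is quoted verbatim from \cite{hogeroehrle:inductivelyfree}, so the comparison is with the proof given there. Your overall architecture (reduce to irreducible $W$ via the product compatibility, use supersolvability for $\CA(G(r,1,\ell))$, Barakat--Cuntz for the Coxeter types, explicit induction tables for $G_{25},G_{26},G_{32}$, and the ``no free deletion'' obstruction of Lemma \ref{lem:q} for the excluded groups) is essentially the strategy of that proof. But your negative direction has two genuine gaps. The first is your ``key structural input'' that inductive freeness is inherited by localizations $\CA_X$: this is a nontrivial theorem which you assert without proof (it is not in \cite{orlikterao:arrangements} nor in \cite{hogeroehrle:inductivelyfree}); the obvious induction on $|\CA|$ stalls because $\exp(\CA_X)^{H_0}\subseteq\exp(\CA_X\setminus\{H_0\})$ does not follow from the corresponding containment for $\CA$, and one needs the facts that localizations of free arrangements are free together with the refined two-out-of-three analysis of the Addition--Deletion machinery. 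Since your whole treatment of $G(r,r,\ell)$ for $\ell>3$ funnels through it, this is a real hole; the actual proof instead runs the exponent obstruction directly in rank $\ell$, where $\exp\CA''\not\subseteq\exp\CA$ still holds, but only as an inclusion of \emph{multisets} (e.g.\ $\{1,4,7,7\}\not\subseteq\{1,4,7,8,10\}$ for $G(3,3,5)$), a subtlety your reduction sidesteps only by leaning on the unproved localization principle.

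The second, and decisive, gap is $G_{31}$, which is covered by neither of your two mechanisms. Its $60$ hyperplanes form a single orbit, and the unique restriction type $(G_{31},A_1)$ has exponents $\{1,13,17\}$ (see Table \ref{table3a}), which \emph{is} contained in $\exp\CA(G_{31})=\{1,13,17,29\}$; so the ``direct exponent computation'' via Lemma \ref{lem:q} proves nothing for $G_{31}$. The localization route would require a rank-$3$ parabolic subgroup of one of the bad types: $G_{24}$ and $G_{27}$ are excluded for order reasons ($7\nmid|G_{31}|=2^{10}3^25$ and $3^3\nmid|G_{31}|$), and no $G(r,r,3)$, $r\ge3$, occurs as a parabolic of $G_{31}$ --- for instance the line $\BBC e_4$, whose pointwise stabilizer would be the natural candidate for $G(4,4,3)$, lies on the coordinate hyperplanes as well, and its parabolic is $G(4,2,3)$, whose arrangement equals the supersolvable $\CA(G(4,1,3))$; the same happens for the other $1$-dimensional flats. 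So even granting the localization theorem, there is no bad parabolic to reduce to, and the proposal leaves $G_{31}$ unproved. This is precisely why the original argument treats $G_{31}$ by a separate computational method (showing no chain of single-hyperplane deletions can stay free, via the cardinality condition on restrictions, \cite[Lem.~3.5]{hogeroehrle:inductivelyfree}), the same technique this paper reuses in Lemma \ref{lem:notindfree-exeptional}. A smaller point: in part (ii) the restrictions of the $D_\ell$-arrangements form the infinite family $\CA^k_m(2)$, so ``going through the finitely many flats'' does not suffice there; one needs Jambu--Terao's result \cite[Ex.~2.6]{jambuterao:supersolvable} that these are inductively free.
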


In order to state our main results, we need 
a bit more notation: For fixed $r, \ell \geq 2$ 
and $0 \leq k \leq \ell$ we denote by
$\CA^k_\ell(r)$ the intermediate arrangements,
defined in \cite[\S 2]{orliksolomon:unitaryreflectiongroups}
(see also \cite[\S 6.4]{orlikterao:arrangements}), that 
interpolate between the reflection arrangements $\CA(G(r,r,\ell)) =
\CA^0_\ell(r)$ and $\CA(G(r,1,\ell)) = \CA^\ell_\ell(r)$, 
of the monomial groups $G(r,r,\ell)$ and $G(r,1,\ell)$, respectively. 
The arrangements $\CA^k_\ell(r)$ occur as 
restrictions of $\CA(G(r,r,\ell))$, 
\cite[Prop.\ 2.14]{orliksolomon:unitaryreflectiongroups}
(cf.\ \cite[Prop.~6.84]{orlikterao:arrangements}), 
see also Example \ref{ex:intermediate} below.
For $k \neq 0, \ell$, these are not reflection arrangements
themselves. See Section \ref{sec:akl} for further details.

Thanks to the compatibility of inductive freeness and 
products of arrangements, see Proposition \ref{prop:product-indfree}, 
as well as the product rule \eqref{eq:restrproduct} for 
restrictions in products, the question of inductive 
freeness of restrictions $\CA^X$ reduces readily to 
the case when $\CA$ is irreducible.
Thus we may assume that $W$ is irreducible.
In view of Theorem \ref{thm:indfree1} we can formulate our 
classification as follows:

\begin{theorem}
\label{thm:indfree}
Let $W$ be a finite, irreducible, complex 
reflection group with reflection arrangement 
$\CA = \CA(W)$ and let $X \in L(\CA)$. 
The restricted arrangement $\CA^X$ is inductively free 
if and only if one of the following holds:
\begin{itemize}
\item[(i)] 
$\CA$ is inductively free;
\item[(ii)] 
$W = G(r,r,\ell)$ and 
$\CA^X \cong \CA^k_p(r)$, where $p = \dim X$ and $p - 2 \leq k \leq p$; 
\item[(iii)] 
$W$ is one of $G_{24}, G_{27}, G_{29}, G_{31}, G_{33}$, or $G_{34}$ and $X \in L(\CA) \setminus \{V\}$ with  $\dim X \leq 3$.
\end{itemize}
\end{theorem}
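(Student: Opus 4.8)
The plan is to exploit Theorem~\ref{thm:indfree1} to reduce the problem to the finite list of irreducible reflection groups $W$ whose reflection arrangement $\CA(W)$ is \emph{not} inductively free, namely the monomial groups $G(r,r,\ell)$ with $r,\ell \geq 3$ and the six exceptional groups $G_{24}, G_{27}, G_{29}, G_{31}, G_{33}, G_{34}$. Indeed, if $\CA = \CA(W)$ is inductively free, then by Theorem~\ref{thm:indfree1}(ii) it is hereditarily inductively free, so every restriction $\CA^X$ is inductively free and case (i) applies; conversely one must show that for the remaining groups the only inductively free restrictions are those listed in (ii) and (iii). So the real content is to analyze, group by group, the intersection lattice $L(\CA(W))$ and decide inductive freeness of each $\CA^X$.

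For the monomial case $W = G(r,r,\ell)$, the first step is to recall from Section~\ref{sec:akl} (and \cite{orliksolomon:unitaryreflectiongroups}) the precise list of restrictions: every $\CA^X$ with $\dim X = p$ is isomorphic to some intermediate arrangement $\CA^k_p(r)$ with $0 \leq k \leq p$ (together, when several independent coordinate-type hyperplanes are used in forming $X$, with an additional empty/boolean factor that does not affect inductive freeness). The task then reduces to determining exactly which $\CA^k_p(r)$ are inductively free. I would prove that $\CA^k_p(r)$ is inductively free precisely when $k \in \{p-2, p-1, p\}$: for $k = p$ it is the reflection arrangement $\CA(G(r,1,p))$ of a full monomial group, which is inductively free by Theorem~\ref{thm:indfree1}(i); for $k = p-1$ and $k = p-2$ one exhibits an explicit induction chain (adding suitable hyperplanes one at a time, checking the exponent/addition–deletion condition of Definition~\ref{def:indfree} at each stage, using the known exponents of the $\CA^k_\ell(r)$), typically by inducting on $p$ and restricting to a hyperplane to land in a smaller intermediate arrangement of the same "defect" $p-k$. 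For $k \leq p-3$ (in particular $\CA^0_\ell(r) = \CA(G(r,r,\ell))$ with $\ell \geq 3$, already excluded by Theorem~\ref{thm:indfree1}(i)) one must show non-inductive-freeness; here the standard device is to compute, via the factored characteristic polynomial, the multiset of exponents and then show no hyperplane can be removed keeping exponents of the required shape — i.e.\ the set of "inductively free" restrictions is closed under the deletion step only for $k \geq p-2$. This combinatorial analysis of the $\CA^k_p(r)$ is where the bulk of the work lies.

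For the six exceptional groups, the strategy is a finite computation: using the explicit descriptions of $\CA(W)$ and the known classification of the intersection lattices $L(\CA(W))$ (available through \CHEVIE{} or directly from \cite{orlikterao:arrangements}), I would enumerate the restrictions $\CA^X$ up to isomorphism in each dimension $\dim X = 2, 3, \ldots$. Restrictions with $\dim X \leq 2$ are automatically inductively free since every arrangement in a space of dimension $\leq 2$ is inductively free. For $\dim X = 3$ one checks (by the addition–deletion theorem, e.g.\ with a computer algebra calculation of exponents) that each such restriction is inductively free; this yields the "$\dim X \leq 3$" clause in (iii). The crux is then to show that \emph{no} restriction with $\dim X \geq 4$ is inductively free and that $\CA(W)$ itself ($X = V$) is not inductively free — the latter is exactly Theorem~\ref{thm:indfree1}(i). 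For $4 \leq \dim X < \dim V$ one argues that any such $\CA^X$ contains, as a further restriction, a copy of one of the known non-inductively-free arrangements (or fails the exponent test directly), so hereditary obstruction propagates upward; concretely, one shows $\CA^X$ has a restriction to some $Y \subset X$ that is not inductively free, and invokes the fact that a restriction of an inductively free arrangement to an element of its lattice is again inductively free (the analogue used implicitly in Theorem~\ref{thm:indfree1}(ii)).

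The main obstacle I anticipate is twofold. First, on the monomial side, proving \emph{non}-inductive-freeness of $\CA^k_p(r)$ for $k \leq p-3$ is not a single exponent check: inductive freeness fails only if \emph{every} choice of hyperplane to delete breaks the chain, so one needs a uniform structural argument — most cleanly, identifying a sub-restriction isomorphic to $\CA(G(r,r,m))$ for some $m \geq 3$ and quoting Theorem~\ref{thm:indfree1}. Second, on the exceptional side, the dimension bound in (iii) must be shown sharp: one has to verify that every restriction of dimension $4$ in $G_{29}, G_{31}, G_{33}, G_{34}$ (the groups of rank $\geq 5$, where such $X$ exist) is non-inductively-free. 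This is a finite but delicate case analysis, best organized by first classifying the dimension-$4$ restrictions up to isomorphism — one expects only a handful of distinct ones, several of which are themselves reflection arrangements like $\CA(G(r,r,4))$ or $\CA(D_4)$-type arrangements whose status is already known — and then ruling each out. I would present the exceptional-group verification in a table listing, for each $W$ and each isomorphism type of restriction $\CA^X$, its exponents and whether it is inductively free, with the inductively-free chains recorded explicitly or delegated to a computer check.
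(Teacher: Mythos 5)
There is a genuine gap, and it lies in the mechanism you propose for ruling out the higher-dimensional restrictions. At two places you invoke the principle that a restriction of an inductively free arrangement is again inductively free (equivalently: if $\CA^X$ has a non-inductively-free restriction, then $\CA^X$ is not inductively free), calling it ``the analogue used implicitly in Theorem~\ref{thm:indfree1}(ii)''. No such general principle is available: hereditary inductive freeness (Definition~\ref{def:heredindfree}) is introduced precisely because inductive freeness is not known to pass to restrictions, and Theorem~\ref{thm:indfree1}(ii) -- as well as Theorem~\ref{thm:heredfree} of this very paper -- are theorems proved via classification, not instances of a general closure property you may quote. Worse, even if the principle were true it could not settle the decisive cases: for the $4$-dimensional restrictions $(G_{33},A_1)$, $(G_{34},A_1^2)$ and $(G_{34},A_2)$ \emph{all} proper restrictions have dimension at most $3$ and are inductively free (Lemma~\ref{lem:indfree-exeptional} and Lemma~\ref{lem:rank1and2}), so there is no ``hereditary obstruction'' to propagate upward. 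The paper instead has to work directly with deletions: for the $5$-dimensional restriction $(G_{34},A_1)$ every restriction is $(G_{34},A_1^2)$ or $(G_{34},A_2)$ with exponents not contained in $\exp\CA^X$, so Lemma~\ref{lem:q} kills every deletion; and for the three $4$-dimensional cases one needs Lemma~\ref{lem:notindfree-exeptional}, an exhaustive breadth-first search showing that no chain of hyperplane removals can satisfy the necessary counting condition \eqref{eq:cond} all the way down (Tables~\ref{tablenif1}--\ref{tablenif3}). Your proposal contains no workable substitute for this step.

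A similar, smaller imprecision affects your monomial argument. Your fallback ``show no hyperplane can be removed keeping exponents of the required shape'' is not literally true for $\CA^k_\ell(r)$ with $1\le k\le \ell-3$: deleting a coordinate hyperplane $\ker(x_i)$ yields $\CA^{k-1}_\ell(r)$, which is free with perfectly admissible exponents, so a pure exponent/characteristic-polynomial obstruction does not close the argument. The paper's Lemma~\ref{lem:notindfree} combines the exponent obstruction (via Lemma~\ref{lem:q}, for the non-coordinate hyperplanes) with an induction on $k$ handling exactly those coordinate deletions; your alternative ``cleanest device'' -- finding a sub-restriction isomorphic to $\CA(G(r,r,m))$ -- fails both because the needed inheritance principle is the same unavailable one as above and because, by Table~\ref{table2}, for $k\ge 1$ the iterated restrictions of $\CA^k_\ell(r)$ never return to type $\CA^0_m(r)$. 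The positive parts of your plan (reduction via Theorem~\ref{thm:indfree1}, explicit induction tables for $\CA^{\ell-2}_\ell(r)$, $\CA^{\ell-1}_\ell(r)$ and for the $3$-dimensional exceptional restrictions) do match the paper, but without a correct non-inductive-freeness argument in the two places above the classification is not established.
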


Note that every 1- and 2-dimensional 
central arrangement is inductively free  
(Lemma \ref{lem:rank1and2}).
So we focus on higher-dimensional restrictions. 
It follows from Theorem \ref{thm:indfree1}(ii)
that for $\CA(W)$ inductively free, every 
restriction $\CA(W)^X$ is again inductively free.
Consequently, 
there are only two additional families of
inductively free restrictions, namely 
$\CA^{p-2}_p(r)$ and $\CA^{p-1}_p(r)$ for $p \ge 3$
(Theorem \ref{thm:indfree}(ii))
and thanks to 
Theorem \ref{thm:indfree1}(i) and 
the classification of the restrictions 
$\CA(W)^X$ for $W$ an exceptional reflection group from 
\cite[App.]{orliksolomon:unitaryreflectiongroups}
(cf.~\cite[App.\ C]{orlikterao:arrangements}), there are an additional 
8  inductively free, $3$-dimensional 
restrictions, up to isomorphism, 
(Theorem \ref{thm:indfree}(iii)),
see \S \ref{subsec:pfthmindfree}.

Our next result asserts that the equivalence of Theorem \ref{thm:indfree1}(ii)
extends to restrictions.

\begin{theorem}
\label{thm:heredfree}
For a finite complex reflection group $W$,  
let $\CA = \CA(W)$ be its reflection arrangement
and let $X \in L(\CA)$.
Then $\CA^X$ is inductively free
if and only if  $\CA^X$ is hereditarily inductively free.
\end{theorem}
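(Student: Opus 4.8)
The plan is to reduce Theorem~\ref{thm:heredfree} to Theorem~\ref{thm:indfree} together with Theorem~\ref{thm:indfree1}(ii). One implication is trivial: if $\CA^X$ is hereditarily inductively free, then in particular $\CA^X = (\CA^X)^X$ is inductively free (taking the ambient flat $X$ itself as the element of $L(\CA^X)$). So the content is the forward implication, and by the product compatibility of inductive freeness (Proposition~\ref{prop:product-indfree}) and the product rule \eqref{eq:restrproduct} for restrictions, we may assume $W$ is irreducible. We then run through the three cases of Theorem~\ref{thm:indfree}.

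In case (i), $\CA = \CA(W)$ is inductively free, hence hereditarily inductively free by Theorem~\ref{thm:indfree1}(ii); but for $X \in L(\CA)$ and $Y \in L(\CA^X)$ one has $Y \in L(\CA)$ and $(\CA^X)^Y = \CA^Y$, so every iterated restriction of $\CA^X$ is a restriction of $\CA$ and therefore inductively free. Thus $\CA^X$ is hereditarily inductively free. In case (iii), $\dim X \le 3$, so every $Y \in L(\CA^X)$ has $\dim Y \le 3$; by Lemma~\ref{lem:rank1and2} the restrictions with $\dim Y \le 2$ are automatically inductively free, and for $\dim Y = 3$ we have $Y = X$ and $(\CA^X)^X = \CA^X$, which is inductively free by hypothesis. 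So again $\CA^X$ is hereditarily inductively free, essentially for dimension reasons.

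The main work is case (ii): $W = G(r,r,\ell)$ and $\CA^X \cong \CA^k_p(r)$ with $p - 2 \le k \le p$. Here I would need to understand the restrictions of the intermediate arrangement $\CA^k_p(r)$. The key structural input is that any restriction $(\CA^k_p(r))^Y$ is again an intermediate arrangement of the same shape, namely isomorphic to some $\CA^{k'}_{p'}(r)$ — this should follow from the description of the $\CA^k_\ell(r)$ and their intersection lattices in Section~\ref{sec:akl} (cf.\ \cite[\S2]{orliksolomon:unitaryreflectiongroups}). The crucial point to verify is that the inequality $p' - 2 \le k' \le p'$ is inherited: restricting can only decrease the ``defect'' $p - k$, so that whenever $\CA^k_p(r)$ lies in the inductively free range, so do all of its restrictions. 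Granting this, every $Y \in L(\CA^X)$ gives $(\CA^X)^Y \cong \CA^{k'}_{p'}(r)$ with $p' - 2 \le k' \le p'$, which is inductively free by Theorem~\ref{thm:indfree}(ii), and we are done.

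The main obstacle is precisely the combinatorial bookkeeping in case (ii): one must show that restricting $\CA^k_p(r)$ at a flat $Y$ of dimension $p'$ yields an arrangement isomorphic to $\CA^{k'}_{p'}(r)$ with $k' \ge k - (p - p') \ge p' - 2$ when $k \ge p-2$, i.e.\ that the parameter $k'$ does not drop faster than the dimension. This requires a careful case analysis of the types of flats in $L(\CA^k_p(r))$ using the explicit equations defining the intermediate arrangements. Once this monotonicity of the defect under restriction is established, Theorem~\ref{thm:heredfree} follows by assembling the three cases as above.
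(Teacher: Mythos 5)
Your skeleton matches the paper's proof: the reverse implication is immediate, the product reduction via Proposition~\ref{prop:product-indfree} and \eqref{eq:restrproduct} is the same, case (i) is handled exactly as in the paper (for $Y \in L(\CA^X)$ one has $(\CA^X)^Y = \CA^Y$, so Theorem~\ref{thm:indfree1}(ii) applies), and your dimension argument in case (iii) is in substance the proof of Lemma~\ref{lem:hif}, which the paper simply invokes together with Lemma~\ref{lem:rank1and2}.

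However, in case (ii) you have a genuine gap: you identify the statement you need (every restriction of $\CA^k_p(r)$ is some $\CA^{k'}_{p'}(r)$ with $k' \ge p'-2$), but you explicitly defer its verification as ``combinatorial bookkeeping'' requiring ``a careful case analysis of the types of flats in $L(\CA^k_p(r))$''. No such analysis of all flats is needed, and this is the idea you are missing. Any restriction to a flat $Y \subsetneq X$ is an iterated \emph{hyperplane} restriction: choose $\bar H \in \CA^X$ with $Y \subseteq \bar H$; then $Y \in L((\CA^X)^{\bar H})$ and $((\CA^X)^{\bar H})^Y = (\CA^X)^Y$, so one may induct on $\dim X$ and only needs to control a single hyperplane restriction at a time. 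That single-step control is exactly Proposition~\ref{prop:intermediate}(ii), i.e.\ Table~\ref{table2}: restricting $\CA^k_p(r)$ to a hyperplane yields $\CA^{k-1}_{p-1}(r)$, $\CA^{k}_{p-1}(r)$, $\CA^{k+1}_{p-1}(r)$, or $\CA^{p-1}_{p-1}(r)$, and in each case the new parameter is at least $(p-1)-2$ whenever $k \ge p-2$ (with $\CA^{p-1}_{p-1}(r) = \CA(G(r,1,p-1))$ covered by Theorem~\ref{thm:indfree1}). Thus your ``defect monotonicity'' is true, but it is not a fresh computation to be carried out from the defining equations; it is read off from Table~\ref{table2}, and induction on dimension then closes case (ii) as in the paper. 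With that step supplied, your proof is complete and coincides with the paper's.
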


The paper is organized as follows: In \S \ref{ssect:recoll} 
we recall the required notation and some facts about inductively free
arrangements from \cite{orlikterao:arrangements} and 
\cite{hogeroehrle:inductivelyfree}. 
In Section \ref{sec:akl} we study the intermediate
arrangements $\CA_\ell^k(r)$, see Theorem \ref{thm:akl}, 
and in Section \ref{sec:proofs} we
prove Theorems \ref{thm:indfree} and \ref{thm:heredfree}.
We close with a result on recursively free restrictions
of reflection arrangements, 
Corollary \ref{cor:recfree}.

For general information about arrangements and reflection groups 
we refer the reader to \cite{orliksolomon:unitaryreflectiongroups},
\cite{orlikterao:arrangements} and
\cite{bourbaki:groupes}. In this article we use the classification 
and labeling of the irreducible unitary reflection groups
due to Shephard and Todd, \cite{shephardtodd}.

\section{Recollections}
\label{ssect:recoll}

\subsection{Hyperplane Arrangements}
\label{ssect:hyper}

Suppose $V$ is a finite dimensional complex vector space. 
By a hyperplane arrangement in $V$ we mean a finite set $\CA$ of
hyperplanes in $V$. Such an arrangement is denoted $(\CA,V)$ 
or simply $\CA$, when there is no ambiguity. If $\dim V = \ell$
we call $\CA$ an $\ell$-arrangement and the empty $\ell$-arrangement 
is denoted by $\Phi_\ell$.

If $X$ is a subspace of $V$, there are two natural arrangements 
associated to $X$,
\begin{itemize}
 \item the subarrangement $\CA_X = (\CA_X,V)$ of $\CA$ defined 
by $\CA_X := \{H \in \CA \;|\; X \subseteq H\}$,
 \item the restriction  $\CA^X = (\CA^X,X)$ of $\CA$ to  $X$ 
defined by $\CA^X := \{H \cap X \;|\; H \in \CA \setminus \CA_X\}$.
\end{itemize}

We only consider \emph{central} arrangements, 
i.e.\ $0 \in  \bigcap_{H \in \CA}H$.
The \emph{lattice} $L(\CA)$  of the arrangement $\CA$ consists of all 
intersections of hyperplanes in $\CA$.
Note that $L(\CA)$ contains $V$ as the empty intersection.

Let $H_0 \in \CA$ (for $\CA \neq \Phi_\ell$) and
define $\CA' := \CA \setminus\{ H_0\}$,
and $\CA'' := \CA^{H_0}$.
Then $(\CA, \CA', \CA'')$ is a \emph{triple} of arrangements (with distinguished hyperplane $H_0$),
\cite[Def.~1.14]{orlikterao:arrangements}.

The \emph{product}
$\CA = (\CA_1 \times \CA_2, V_1 \oplus V_2)$ 
of two arrangements $(\CA_1, V_1), (\CA_2, V_2)$
is defined by
\begin{equation}
\label{eq:product}
\CA := \CA_1 \times \CA_2 = \{H_1 \oplus V_2 \mid H_1 \in \CA_1\} \cup 
\{V_1 \oplus H_2 \mid H_2 \in \CA_2\},
\end{equation}
see \cite[Def.~2.13]{orlikterao:arrangements}.

An arrangement $\CA$ is called \emph{reducible},
if it is of the form $\CA = \CA_1 \times \CA_2$, where 
$\CA_i \ne \Phi_0$ for $i=1,2$, else $\CA$
is 
\emph{irreducible}, 
\cite[Def.~2.15]{orlikterao:arrangements}.

If $\CA = \CA_1 \times \CA_2$ is a product, 
then by \cite[Prop.~2.14]{orlikterao:arrangements}
there is a lattice isomorphism
\[
 L(\CA_1) \times L(\CA_2) \cong L(\CA) \quad \text{by} \quad
(X_1, X_2) \mapsto X_1 \oplus X_2.
\]
With \eqref{eq:product}, it is easy to see that
for $X =  X_1 \oplus X_2 \in L(\CA)$, we have 
$\CA _X =  ({\CA_1})_{X_1} {\times} ({\CA_2})_{X_2}$
and 
\begin{equation}
\label{eq:restrproduct}
\CA^X = \CA_1^{X_1} \times \CA_2^{X_2}.
\end{equation}

\subsection{Free Arrangements}
\label{ssect:free}

Let $S = S(V^*)$ be the symmetric algebra of the dual space $V^*$ of $V$.
If $x_1, \ldots , x_\ell$ is a basis of $V^*$, then we identify $S$ with 
the polynomial ring $\BBC[x_1, \ldots , x_\ell]$.
By denoting the $\BBC$-subspace of $S$
consisting of the homogeneous polynomials of degree $p$ (and $0$) by $S_p$,
we see that there is a natural $\BBZ$-grading
$S = \oplus_{p \in \BBZ}S_p$, where
$S_p = 0$ for $p < 0$.

Let $\Der(S)$ be the $S$-module of $\BBC$-derivations of $S$
and for $i = 1, \ldots, \ell$
define $D_i := \partial/\partial x_i$.
Now $D_1, \ldots, D_\ell$ is an $S$-basis of $\Der(S)$ and
we call $\theta \in \Der(S)$
\emph{homogeneous of polynomial degree $p$}
provided 
$\theta = \sum_{i=1}^\ell f_i D_i$, 
where $f_i \in S_p$ for each $1 \leq i \leq \ell$.
In this case we write $\pdeg \theta = p$.
By defining $\Der(S)_p$ to be the $\BBC$-subspace of $\Der(S)$ consisting 
of all homogeneous derivations of polynomial degree $p$,
we see that $\Der(S)$ is a graded $S$-module:
$\Der(S) = \oplus_{p\in \BBZ} \Der(S)_p$.

Following \cite[Def.~4.4]{orlikterao:arrangements}, 
we define the $S$-submodule $D(f)$ of $\Der(S)$ for $f \in S$ by
\[
D(f) := \{\theta \in \Der(S) \mid \theta(f) \in f S\} .
\]

If $\CA$ is an arrangement in $V$,
then for every $H \in \CA$ we may fix $\alpha_H \in V^*$ with
$H = \ker(\alpha_H)$.
We call $Q(\CA) := \prod_{H \in \CA} \alpha_H \in S$
the \emph{defining polynomial} of $\CA$.

The \emph{module of $\CA$-derivations} is the $S$-submodule of $\Der(S)$ 
defined by 
\[
D(\CA) := D(Q(\CA)).
\]
The arrangement $\CA$ is said to be \emph{free} if the module of $\CA$-derivations
$D(\CA)$ is a free $S$-module.

Note that $D(\CA)$
is a graded $S$-module $D(\CA) = \oplus_{p\in \BBZ} D(\CA)_p$,
where $D(\CA)_p = D(\CA) \cap \Der(S)_p$, see
\cite[Prop.~4.10]{orlikterao:arrangements}.
If $\CA$ is a free $\ell$-arrangement, 
then by \cite[Prop.~4.18]{orlikterao:arrangements}
the $S$-module $D(\CA)$ admits a basis of $\ell$ homogeneous derivations $\theta_1, \ldots, \theta_\ell$.
While these derivations are not unique, their polynomial 
degrees $\pdeg \theta_i$ are unique (up to ordering).
The \emph{exponents} of the free arrangement $\CA$ is the multiset
$\exp\CA := \{\pdeg \theta_1, \ldots, \pdeg \theta_\ell\}$.

The fundamental \emph{Addition-Deletion Theorem} 
due to Terao  \cite{terao:freeI} plays a 
crucial role in the study of free arrangements, 
\cite[Thm.\ 4.51]{orlikterao:arrangements}.

\begin{theorem}
\label{thm:add-del}
Suppose $\CA \neq \Phi_\ell$ and
let $(\CA, \CA', \CA'')$ be a triple of arrangements. Then any 
two of the following statements imply the third:
\begin{itemize}
\item $\CA$ is free with $\exp\CA = \{ b_1, \ldots , b_{\ell -1}, b_\ell\}$;
\item $\CA'$ is free with $\exp\CA' = \{ b_1, \ldots , b_{\ell -1}, b_\ell-1\}$;
\item $\CA''$ is free with $\exp\CA'' = \{ b_1, \ldots , b_{\ell -1}\}$.
\end{itemize}
\end{theorem}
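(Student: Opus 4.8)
The statement is Terao's Addition--Deletion Theorem, and the plan is to derive it from Saito's criterion (\cite[Thm.~4.19]{orlikterao:arrangements}) together with two short exact sequences relating the three modules $D(\CA)$, $D(\CA')$, $D(\CA'')$. Fix the distinguished hyperplane $H_0$, put $\alpha := \alpha_{H_0}$, and choose coordinates $x_1, \ldots, x_\ell$ of $V^*$ with $x_1 = \alpha$, so that $\oS := S/\alpha S = \BBC[\bar x_2, \ldots, \bar x_\ell]$ is the coordinate ring of $H_0$ and hence the ambient ring of $\CA'' = \CA^{H_0}$. Using the characterization $D(\CB) = \{\theta \in \Der(S) \mid \theta(\alpha_H) \in \alpha_H S \text{ for all } H \in \CB\}$ (\cite[Prop.~4.8]{orlikterao:arrangements}) one has $D(\CA) = D(\CA') \cap D(\alpha)$, so in particular $D(\CA) \subseteq D(\CA')$.

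First I would set up the two sequences. The assignment $\theta \mapsto \overline{\theta(\alpha)}$ defines a graded $S$-linear map $\pi \colon D(\CA') \to \oS$ whose kernel is $D(\CA') \cap D(\alpha) = D(\CA)$, yielding
\[
0 \longrightarrow D(\CA) \longrightarrow D(\CA') \stackrel{\pi}{\longrightarrow} \oS .
\]
Second, restricting a derivation to $H_0$ --- reducing its coefficients modulo $\alpha$ and dropping the $\partial/\partial\alpha$ component --- gives a graded map $\rho \colon D(\CA) \to D(\CA'')$; the defining condition $\theta(\alpha) \in \alpha S$ on $D(\CA)$ is exactly what forces $\rho(\theta)$ to preserve each $\alpha_H \bmod \alpha$, so $\rho$ is well defined, and a direct computation identifies $\ker\rho = \alpha D(\CA')$. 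This gives the second left-exact sequence
\[
0 \longrightarrow \alpha D(\CA') \longrightarrow D(\CA) \stackrel{\rho}{\longrightarrow} D(\CA'') .
\]

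The engine is Saito's criterion: homogeneous $\theta_1, \ldots, \theta_\ell \in D(\CB)$ form an $S$-basis of $D(\CB)$ if and only if $\det(\theta_i(x_j))$ is a nonzero scalar multiple of $Q(\CB)$, which for members of $D(\CB)$ reduces to $\det(\theta_i(x_j)) \neq 0$ together with $\sum_i \pdeg\theta_i = |\CB|$. The exponent hypotheses are engineered so that the Hilbert series bookkeeping closes. For instance, in the restriction step (assuming $\CA$ and $\CA'$ free), since a free graded $S$-module with homogeneous basis in degrees $d_1, \ldots, d_\ell$ has Hilbert series $(\sum_i t^{d_i})(1-t)^{-\ell}$, a short telescoping computation gives
\[
\mathrm{Hilb}\big(D(\CA)/\alpha D(\CA')\big) = \frac{\sum_{i=1}^{\ell-1} t^{b_i}}{(1-t)^{\ell-1}},
\]
which is precisely the Hilbert series of a free $\oS$-module with generators in degrees $b_1, \ldots, b_{\ell-1}$; combining the inclusion $\mathrm{im}\,\rho \subseteq D(\CA'')$ with Saito applied on $H_0$ then forces $\mathrm{im}\,\rho = D(\CA'')$ and shows $\CA''$ is free with $\exp\CA'' = \{b_1, \ldots, b_{\ell-1}\}$. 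The addition and deletion steps are run by reading the same two sequences the other way: for addition I would lift a homogeneous basis of $D(\CA'')$ through $\rho$ and adjoin $\alpha\theta_\ell$, where $\theta_\ell \in D(\CA')$ has degree $b_\ell - 1$, producing $\ell$ derivations in $D(\CA)$ of degrees $b_1, \ldots, b_\ell$ to which Saito applies.

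The hard part will be the image control hidden in these lifts. In the addition and deletion cases the Hilbert series of $D(\CA)$ is not known in advance, so one cannot read off surjectivity of $\rho$ from a single sequence; instead one must play the two sequences against each other --- equivalently, show that $\mathrm{im}\,\pi \subseteq \oS$ is the principal ideal generated by a nonzero element of degree $b_\ell - 1$ --- in order to licence the lifting, and this is the point at which the freeness and the exact exponent relation are genuinely consumed. The remaining delicate verification is that the constructed family is $S$-independent, i.e.\ that its Saito determinant is nonzero and of the correct degree $|\CA|$ (rather than merely matching the right graded dimensions), so that Saito's criterion yields a basis with exactly the asserted exponents.
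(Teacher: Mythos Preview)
The paper does not give its own proof of this theorem: it is quoted verbatim as Terao's Addition--Deletion Theorem with a citation to \cite{terao:freeI} and \cite[Thm.~4.51]{orlikterao:arrangements}, and is used throughout as a black box. There is therefore no in-paper argument to compare your proposal against.

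That said, your sketch is essentially the standard proof as presented in \cite[\S4.3]{orlikterao:arrangements}: the two left-exact sequences
\[
0 \to D(\CA) \hookrightarrow D(\CA') \xrightarrow{\ \pi\ } \oS,
\qquad
0 \to \alpha D(\CA') \hookrightarrow D(\CA) \xrightarrow{\ \rho\ } D(\CA'')
\]
together with Saito's criterion are precisely the ingredients used there. One point to sharpen: in the restriction implication (the first two statements imply the third), the Hilbert-series identity you compute for $D(\CA)/\alpha D(\CA')$ tells you the graded size of $\operatorname{im}\rho$, but it does not by itself force $\operatorname{im}\rho = D(\CA'')$, because $|\CA''|$ is not given by the hypotheses. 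The closure comes from the determinant computation you allude to at the end: choosing a basis $\psi_1,\ldots,\psi_{\ell-1},\psi_\ell$ of $D(\CA')$ so that $\psi_1,\ldots,\psi_{\ell-1},\alpha\psi_\ell$ is a basis of $D(\CA)$, one expands the Saito determinant of the latter along the first column (which lies in $\alpha S$) and reads off that the $(\ell-1)\times(\ell-1)$ Saito determinant of $\bar\psi_1,\ldots,\bar\psi_{\ell-1}$ on $H_0$ is a nonzero scalar multiple of $Q(\CA'')$, not merely a multiple of it. This simultaneously pins down $|\CA''| = \sum_{i<\ell} b_i$ and, via Saito, gives the basis. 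Your final paragraph correctly identifies this as the delicate step; just be aware that it is already needed in the restriction case, not only in addition and deletion.
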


Suppose that  $\CA \ne \Phi_\ell$ and consider a triple $(\CA,\CA', \CA'')$.
We recall the following very useful criterion from 
\cite[Cor.~2.7]{hogeroehrle:inductivelyfree}:

\begin{lemma}
\label{lem:q}
Suppose that $\CA$ and $\CA''$ are free
and $\exp \CA'' \not\subseteq \exp \CA$.
Then $\CA'$ is not free.
\end{lemma}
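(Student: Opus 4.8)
The plan is to argue by contradiction, assuming that $\CA'$ is free, and then to exploit the full force of Theorem \ref{thm:add-del}. So suppose $\CA$, $\CA'$ and $\CA''$ are all free. Since $\CA \neq \Phi_\ell$ we have a triple $(\CA, \CA', \CA'')$ with distinguished hyperplane $H_0$, and $\CA'$ differs from $\CA$ by exactly one hyperplane. The first step is to recall the basic restriction $\pdeg Q(\CA') = \pdeg Q(\CA) - 1$, so that comparing exponents as multisets, $\sum \exp \CA = |\CA| = |\CA'| + 1 = \sum \exp \CA' + 1$; this numerical constraint will be used to pin down how the multisets can differ.

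Next I would invoke Theorem \ref{thm:add-del} in the direction where freeness of $\CA$ and $\CA'$ is the input. Strictly speaking, the Addition-Deletion Theorem as stated requires the two hypotheses to be of the specially matched form $\exp \CA = \{b_1, \dots, b_{\ell-1}, b_\ell\}$ and $\exp \CA' = \{b_1, \dots, b_{\ell-1}, b_\ell - 1\}$; the key point is a result of Terao (see \cite[Thm.~4.51]{orlikterao:arrangements} and the surrounding discussion, or \cite[Thm.~4.46]{orlikterao:arrangements}) guaranteeing that whenever $\CA$ and $\CA'$ are both free, their exponent multisets \emph{automatically} have this matched shape, and consequently $\CA''$ is free with $\exp \CA'' = \{b_1, \dots, b_{\ell-1}\}$. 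In particular $\exp \CA'' \subseteq \exp \CA$ as multisets. This directly contradicts the hypothesis $\exp \CA'' \not\subseteq \exp \CA$, and the lemma follows.

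The one subtlety — and the step I expect to require the most care in the write-up — is justifying that matched shape of $(\exp \CA, \exp \CA')$ purely from the freeness of both $\CA$ and $\CA'$, rather than assuming it. This is exactly the content of the division-of-exponents phenomenon: if $\theta_1, \dots, \theta_\ell$ is a homogeneous basis of $D(\CA')$, then since $D(\CA) \subseteq D(\CA')$ one shows (using that $\alpha_{H_0}$ divides the relevant combination, or the Euler-type argument) that a homogeneous basis of $D(\CA)$ is obtained by replacing exactly one $\theta_i$ by $\alpha_{H_0}^{} \theta_i$, which raises exactly one exponent by $1$. Since the paper has Theorem \ref{thm:add-del} available and this matched-shape fact is part of the standard Addition-Deletion package in \cite{orlikterao:arrangements}, in the final text I would simply cite it; the contradiction with $\exp \CA'' \not\subseteq \exp \CA$ is then immediate and completes the proof.
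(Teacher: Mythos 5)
Your proposal is correct and is essentially the argument behind the paper's source for this statement: the paper gives no proof here but simply cites \cite[Cor.~2.7]{hogeroehrle:inductivelyfree}, whose proof is exactly your contradiction via the Addition--Deletion Theorem, using the standard fact that if $\CA$ and $\CA'$ are both free then their exponent multisets automatically agree except in one entry, where they differ by $1$, so that $\CA''$ would be free with $\exp\CA'' \subseteq \exp\CA$, contradicting the hypothesis. The only point to tidy in a final write-up is the reference for that matched-exponents fact, which you correctly isolate as the one subtle step; it is part of the standard Addition--Deletion package and follows, as you sketch, from $\alpha_{H_0}D(\CA') \subseteq D(\CA) \subseteq D(\CA')$ together with the observation that $D(\CA')/D(\CA)$ is a free rank-one module over the coordinate ring of $H_0$, so this is a citation-precision issue rather than a mathematical gap.
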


\subsection{Inductively and Recursively Free Arrangements}
\label{ssect:indfree}

Theorem \ref{thm:add-del} motivates the notion of 
\emph{inductively free} arrangements, 
cf.\ \cite[Def.~4.53]{orlikterao:arrangements}:

\begin{definition}
\label{def:indfree}
The class $\CIF$ of \emph{inductively free} arrangements 
is the smallest class of arrangements subject to
\begin{itemize}
\item[(i)] $\Phi_\ell \in \CIF$ for each $\ell \ge 0$;
\item[(ii)] if there exists a hyperplane $H_0 \in \CA$ such that both
$\CA'$ and $\CA''$ belong to $\CIF$, and $\exp \CA '' \subseteq \exp \CA'$, 
then $\CA$ also belongs to $\CIF$.
\end{itemize}
\end{definition}

\begin{remark}
\label{rem:indtable}
An inductively free arrangement $\CA$ can be described by means of a so called 
\emph{induction table}, cf.~\cite[\S 4.3, p.~119]{orlikterao:arrangements}.
In this process we successively add hyperplanes to an inductively free arrangement
$\CA_0$, ensuring that in each step part (ii) of Definition \ref{def:indfree} is satisfied.
This process is referred to as \emph{induction of hyperplanes}.
It amounts to choosing a total order on $\CA\setminus\CA_0$, say 
$\CA = \CA_0 \cup \{H_1, \ldots, H_n\}$, 
so that the subarrangements 
$\CA_0$, $\CA_i := \CA_0 \cup \{H_1, \ldots, H_i\}$
and the restrictions $\CA_i^{H_i}$ are inductively free
for all $1 \leq i \leq n$.
In the associated induction table we record in the $i\th$ row the information 
of the $i\th$ step of this process, by 
listing $\exp \CA_i' = \exp\CA_{i-1}$, 
the defining form $\alpha_{H_i}$ of $H_i$, 
as well as $\exp\CA_i'' = \exp\CA_i^{H_i}$, 
for $i = 1, \ldots, n$.
E.g.\ see Tables \ref{table3} -- \ref{table8} below.
\end{remark}

Next we recall the compatibility 
of products and inductive freeness from
\cite[Prop.~2.10]{hogeroehrle:inductivelyfree}:

\begin{proposition}
\label{prop:product-indfree}
Let $\CA_1, \CA_2$ be two arrangements.
Then  $\CA = \CA_1 \times \CA_2$ is 
inductively free if and only if both 
$\CA_1$ and $\CA_2$ are 
inductively free and in that case
$\exp\CA = \{\exp\CA_1, \exp\CA_2\}$.
\end{proposition}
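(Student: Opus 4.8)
The plan is to prove both directions by induction on the number of hyperplanes $|\CA|$, exploiting the product structure of triples. For the ``only if'' direction, I would first recall that if $\CA = \CA_1 \times \CA_2$ is free, then standard facts on freeness of products (e.g.\ \cite[Prop.~4.28]{orlikterao:arrangements}) give that each $\CA_i$ is free with $\exp\CA = \{\exp\CA_1,\exp\CA_2\}$; the real content is to propagate \emph{inductive} freeness. The key observation is that if $\CA = \CA_1\times\CA_2 \in \CIF$ is witnessed by a hyperplane $H_0$, then either $H_0 = H_1 \oplus V_2$ for some $H_1 \in \CA_1$ or $H_0 = V_1 \oplus H_2$ for some $H_2 \in \CA_2$; say the former. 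Then $\CA' = \CA_1' \times \CA_2$ where $\CA_1' = \CA_1 \setminus\{H_1\}$, and $\CA'' = \CA^{H_0} = \CA_1^{H_1}\times\CA_2$ by \eqref{eq:restrproduct}. Since $\CA',\CA''$ are products lying in $\CIF$ with strictly fewer hyperplanes than $\CA$, the inductive hypothesis applies: $\CA_1', \CA_1^{H_1}$ and $\CA_2$ are all inductively free, with the exponent multisets splitting as products. One then checks, using $\exp\CA'' \subseteq \exp\CA'$ together with Proposition \ref{prop:product-indfree} applied to the smaller products, that $\exp(\CA_1^{H_1}) \subseteq \exp(\CA_1')$, so $(\CA_1,\CA_1',\CA_1^{H_1})$ witnesses $\CA_1 \in \CIF$; and $\CA_2$ was already shown inductively free. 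The exponent formula $\exp\CA = \{\exp\CA_1,\exp\CA_2\}$ then follows from the freeness-of-products fact (or is read off directly from the induction step).

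For the ``if'' direction, assume $\CA_1,\CA_2 \in \CIF$ and argue by induction on $|\CA_1| + |\CA_2|$. If either factor is empty, say $\CA_2 = \Phi_m$, then $\CA = \CA_1 \times \Phi_m$ and one checks directly that adding the ``trivial'' coordinate directions does not affect inductive freeness (or handles the base case $\CA_1 \in \CIF$ via the $\Phi_\ell$ clause and induction of hyperplanes). Otherwise, pick a hyperplane $H_1 \in \CA_1$ witnessing $\CA_1 \in \CIF$, so $\CA_1', \CA_1^{H_1} \in \CIF$ with $\exp\CA_1^{H_1}\subseteq\exp\CA_1'$. Set $H_0 := H_1\oplus V_2 \in \CA$. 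Then $\CA' = \CA_1'\times\CA_2$ and $\CA'' = \CA^{H_0} = \CA_1^{H_1}\times\CA_2$, both of which are inductively free by the inductive hypothesis (they are products of inductively free arrangements with smaller total size), and their exponents are $\{\exp\CA_1',\exp\CA_2\}$ and $\{\exp\CA_1^{H_1},\exp\CA_2\}$ respectively. Since $\exp\CA_1^{H_1}\subseteq\exp\CA_1'$, we get $\exp\CA''\subseteq\exp\CA'$ as multisets, so Definition \ref{def:indfree}(ii) with distinguished hyperplane $H_0$ shows $\CA \in \CIF$, and the exponents combine to $\{\exp\CA_1,\exp\CA_2\}$.

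The main obstacle I anticipate is bookkeeping the exponent multisets correctly through the induction — in particular, verifying the multiset inclusion $\exp\CA''\subseteq\exp\CA'$ rather than a mere set-theoretic one, and making sure the reduction in the ``only if'' direction genuinely decreases a well-founded quantity (so one should induct on $|\CA|$ and, when $|\CA_1|$ or $|\CA_2|$ could be $0$, separate out the empty-factor base cases cleanly). A secondary subtlety is that the witnessing hyperplane $H_0$ for $\CA \in \CIF$ lives in one factor, so the argument is not symmetric at each step; this is harmless but must be stated carefully. Everything else is a routine application of \eqref{eq:restrproduct} and the definition of $\CIF$.
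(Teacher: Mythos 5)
Your proposal is correct and follows essentially the same route as the proof of this statement in \cite[Prop.~2.10]{hogeroehrle:inductivelyfree}, which the present paper only recalls: induction on the number of hyperplanes, noting that the distinguished hyperplane lies in one factor so that deletion and restriction again decompose as products via \eqref{eq:restrproduct}, with the multiset inclusion recovered by cancelling the common factor's exponents. No gaps worth flagging.
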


There is an even stronger notion of freeness,
cf.~\cite[\S 6.4]{orlikterao:arrangements}.

\begin{definition}
\label{def:heredindfree}
The arrangement $\CA$ is called 
\emph{hereditarily inductively free} provided 
that $\CA^X$ is inductively free for each $X \in L(\CA)$.
\end{definition}

Note that if $\CA$ is hereditarily inductively free, 
it is inductively free as $V \in L(\CA)$ and $\CA^V = \CA$.

The compatibility with 
products from Proposition \ref{prop:product-indfree}
also extends to this stronger notion, cf.\ 
\cite[Cor.~2.12]{hogeroehrle:inductivelyfree}.

It is easy to see that any $1$- or $2$-arrangement is
hereditarily inductively free, 
\cite[Ex.~2.13, Lem.~2.14]{hogeroehrle:inductivelyfree}:

\begin{lemma}
\label{lem:rank1and2}
Any central 1- or 2-arrangement is hereditarily inductively free.
\end{lemma}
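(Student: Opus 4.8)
The statement to prove is Lemma \ref{lem:rank1and2}: any central 1- or 2-arrangement is hereditarily inductively free.

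\bigskip

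The plan is to verify the defining condition of hereditary inductive freeness directly: for a central arrangement $\CA$ of rank at most $2$, I must show $\CA^X$ is inductively free for every $X \in L(\CA)$. The key observation is that every restriction $\CA^X$ is itself a central arrangement of dimension $\dim X \le 2$, so it suffices to prove the sharper claim that \emph{every} central $1$- or $2$-arrangement is inductively free; hereditary inductive freeness then follows immediately, since the restrictions stay within the same class. (One should also note the degenerate cases: the empty arrangements $\Phi_0, \Phi_1, \Phi_2$ are inductively free by Definition \ref{def:indfree}(i), and a $0$-dimensional nonempty arrangement does not arise from a central one.)

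\bigskip

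For the dimension-$1$ case, a nonempty central $1$-arrangement consists of the single hyperplane $\{0\} \subseteq \BBC$, i.e.\ $\CA = \{H_0\}$; then $\CA' = \Phi_1 \in \CIF$ and $\CA'' = \CA^{H_0} = \Phi_0 \in \CIF$, with $\exp \CA'' = \{\} \subseteq \{0\} = \exp \CA'$, so Definition \ref{def:indfree}(ii) applies and $\CA \in \CIF$ with $\exp \CA = \{1\}$. For the dimension-$2$ case, write $\CA = \{H_1, \ldots, H_n\}$ for the hyperplanes (distinct lines through $0$ in $\BBC^2$), and build an induction table: start from $\CA_0 = \Phi_2$, which is inductively free, and add the hyperplanes one at a time, $\CA_i = \CA_{i-1} \cup \{H_i\}$. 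At each step the restriction $\CA_i^{H_i}$ is a central arrangement on the line $H_i \cong \BBC$; it is nonempty precisely when $i \ge 2$, hence $\CA_i^{H_i} = \Phi_1$ for $i = 1$ and $\CA_i^{H_i} = \{0\}$ for $i \ge 2$. An easy computation (or induction using Theorem \ref{thm:add-del}) gives $\exp \CA_i = \{1, i-1\}$, while $\exp \CA_i'' = \{\}$ for $i=1$ and $\{1\}$ for $i\ge 2$; in either case $\exp \CA_i'' \subseteq \exp \CA_{i-1} = \exp \CA_i'$, so Definition \ref{def:indfree}(ii) is satisfied at every step and $\CA \in \CIF$.

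\bigskip

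The argument is essentially routine; the only point requiring a moment's care is the bookkeeping of exponents in the $2$-dimensional induction, and the observation that the class of central arrangements of rank $\le 2$ is closed under taking restrictions, which is what lets "inductively free" upgrade to "hereditarily inductively free". There is no genuine obstacle. (This is of course exactly the content of \cite[Ex.~2.13, Lem.~2.14]{hogeroehrle:inductivelyfree} cited in the statement, so one may alternatively simply invoke that reference.)
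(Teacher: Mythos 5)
Your proof is correct, and it is essentially the standard argument that the paper itself does not reproduce but simply imports from \cite[Ex.~2.13, Lem.~2.14]{hogeroehrle:inductivelyfree}: show every central $1$- or $2$-arrangement is inductively free by adding hyperplanes one at a time, and note that all restrictions of such an arrangement again lie in this class. The only blemish is cosmetic bookkeeping of exponents of empty arrangements (e.g.\ $\exp\Phi_1 = \{0\}$ rather than $\emptyset$), which does not affect the required containments $\exp\CA_i'' \subseteq \exp\CA_i'$.
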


These two stronger notions of freeness still coincide for $3$-arrangements, 
\cite[Lem.~2.15]{hogeroehrle:inductivelyfree}:

\begin{lemma}
\label{lem:hif}
Let $\CA$ be a $3$-arrangement. Then $\CA$ is inductively free if and only 
if $\CA$ is hereditarily inductively free.
\end{lemma}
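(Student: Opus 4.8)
The plan is to observe that for a central $3$-arrangement the only restriction that can fail to be inductively free for trivial reasons is the arrangement itself, so that the two notions collapse. One direction is immediate: if $\CA$ is hereditarily inductively free, then since $V \in L(\CA)$ and $\CA^V = \CA$, the arrangement $\CA$ is inductively free, as already noted after Definition \ref{def:heredindfree}.

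For the converse, suppose $\CA$ is an inductively free $3$-arrangement and let $X \in L(\CA)$; I must show $\CA^X \in \CIF$. I would argue according to $\dim X$. If $\dim X = 3$, then $X = V$ and $\CA^X = \CA$ is inductively free by hypothesis. If $\dim X = 0$, then $X = \{0\}$ and $\CA^X = \Phi_0 \in \CIF$ by Definition \ref{def:indfree}(i). In the remaining cases $\dim X \in \{1,2\}$, and since $\CA$ is central, $X$ is an intersection of hyperplanes through the origin, so $\CA^X$ is again a central arrangement, of rank at most $2$; hence $\CA^X$ is inductively free by Lemma \ref{lem:rank1and2}. Thus $\CA^X$ is inductively free for every $X \in L(\CA)$, i.e.\ $\CA$ is hereditarily inductively free.

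There is no real obstacle here; the content of the lemma is precisely the elementary observation that every proper restriction of a $3$-arrangement is a $0$-, $1$-, or $2$-arrangement, for which inductive freeness is automatic by Definition \ref{def:indfree}(i) and Lemma \ref{lem:rank1and2}. The only step requiring care is the bookkeeping of the possible dimensions of $X \in L(\CA)$ and the remark that restrictions of central arrangements remain central.
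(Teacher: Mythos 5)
Your proof is correct, and it is essentially the argument behind this lemma: the paper itself only recalls the statement from \cite[Lem.~2.15]{hogeroehrle:inductivelyfree}, and the proof there is exactly your observation that every proper restriction of a central $3$-arrangement is a central arrangement of dimension at most $2$, hence inductively free by Lemma \ref{lem:rank1and2}, while $\CA^V=\CA$ handles the remaining case.
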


There is another notion of freeness motivated by Theorem \ref{thm:add-del}, cf.\ 
\cite[Def.~4.60]{orlikterao:arrangements}:

\begin{definition}
\label{def:recfree}
The class $\CRF$ of \emph{recursively free} arrangements 
is the smallest class of arrangements subject to
\begin{itemize}
\item[(i)] $\Phi_\ell \in \CRF$ for each $\ell \geq 0$;
\item[(ii)] if there exists a hyperplane $H_0 \in \CA$ such that both
$\CA'$ and $\CA''$ belong to $\CRF$, and $\exp \CA'' \subseteq \exp \CA'$, 
then $\CA$ also belongs to $\CRF$;
\item[(iii)] if $\CA \in \CRF$ and there exists a hyperplane $H_0 \in \CA$
such that $\CA'' \in \CRF$ and $\exp \CA'' \subseteq \exp \CA$, then $\CA'$
also belongs to $\CRF$.
\end{itemize}
\end{definition}

\subsection{Reflection Arrangements}
\label{ssect:refl}

Let $W \subseteq \GL(V)$ be a finite, 
complex reflection group acting on the complex vector space $V=\BBC^\ell$.
The \emph{reflection arrangement} of $W$ in $V$ is the 
hyperplane arrangement $\CA = \CA(W)$ 
consisting of the reflecting hyperplanes 
of the elements in $W$ acting as reflections on $V$.

Terao \cite{terao:freeI} has shown that every 
reflection arrangement $\CA = \CA(W)$ is free 
and that the exponents of $\CA$
coincide with the coexponents of $W$, see also 
\cite[Prop.~6.59 and Thm.~6.60]{orlikterao:arrangements}. 

Note that the reflection arrangements 
of $G(r,1,\ell)$ and $G(r,p,\ell)$ 
with $r, \ell \geq 2$ and $p \neq r$ are identical, cf.\ 
\cite[\S 6.4]{orlikterao:arrangements}.

\section{The intermediate arrangements $\CA_\ell^k(r)$}
\label{sec:akl}

Orlik and Solomon defined intermediate 
arrangements $\CA^k_\ell(r)$ in 
\cite[\S 2]{orliksolomon:unitaryreflectiongroups}
(cf.\ \cite[\S 6.4]{orlikterao:arrangements}) which
interpolate between the
reflection arrangements of $G(r,r,\ell)$ and $G(r,1,\ell)$. 
These play a pivotal role in our analysis, since they 
show up as restrictions of the reflection arrangement
of $G(r,r,\ell)$, 
\cite[Prop.\ 2.14]{orliksolomon:unitaryreflectiongroups} 
(cf.~\cite[Prop.\ 6.84]{orlikterao:arrangements}),
see also Example \ref{ex:intermediate}.

For 
$\ell \geq 2$ and $0 \leq k \leq \ell$ the defining polynomial of
$\CA^k_\ell(r)$ is given by
$$Q(\CA^k_\ell(r)) = x_1 \cdots x_k\prod\limits_{\substack{1 \leq i < j \leq \ell\\ 0 \leq n < r}}(x_i - \zeta^nx_j),$$
where $\zeta$ is a primitive $r\th$ root of unity,
so that 
$\CA^\ell_\ell(r) = \CA(G(r,1,\ell))$ and 
$\CA^0_\ell(r) = \CA(G(r,r,\ell))$. Next we recall
\cite[Props.\ 2.11,  2.13]{orliksolomon:unitaryreflectiongroups}
(cf.~\cite[Props.~6.82,  6.85]{orlikterao:arrangements}):

\begin{proposition}
\label{prop:intermediate}
Let $\CA = \CA^k_\ell(r)$.
\begin{enumerate}
 \item[(i)] $\CA$ is free with $\exp\CA = \{1, r + 1, \ldots, (\ell - 2)r + 1, (\ell - 1)r - \ell + k + 1\}$.
 \item[(ii)] Let $H \in \CA$. The type of $\CA^H$ is given in Table \ref{table2}.
\end{enumerate}
\end{proposition}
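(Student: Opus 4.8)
The plan is to establish (ii) first, by direct inspection of the defining polynomial, and then to deduce (i) from (ii) together with Terao's freeness theorem and the Addition--Deletion Theorem~\ref{thm:add-del}.

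\emph{Part (ii).} Write $\CA = \CA^k_\ell(r)$. The group $S_k \times S_{\ell-k}$, acting by permutations of $\{x_1, \dots, x_k\}$ and of $\{x_{k+1}, \dots, x_\ell\}$, together with the torus of diagonal matrices whose entries are $r\th$ roots of unity, preserves $\CA$; a short check shows that this group acts transitively on each of the following four classes of hyperplanes of $\CA$: (a) $\ker x_i$ with $1 \le i \le k$; (b) $\ker(x_i - \zeta^n x_j)$ with $1 \le i < j \le k$; (c) $\ker(x_i - \zeta^n x_j)$ with $i \le k < j$; (d) $\ker(x_i - \zeta^n x_j)$ with $k < i < j \le \ell$. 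Hence it suffices to compute $\CA^H$ for one representative $H$ in each class, and this reduces to substituting the defining relation of $H$ into $Q(\CA)$ and recording which factors survive, coincide, or collapse. The point requiring care is that for a braid hyperplane $H = \ker(x_i - \zeta^n x_j)$ the remaining $r-1$ forms $x_i - \zeta^m x_j$, $m \ne n$, all restrict on $X = H$ to a nonzero multiple of $x_i$, i.e.\ to a single new coordinate hyperplane $\ker x_i$; bookkeeping of the surviving coordinate hyperplanes then yields, after relabelling coordinates, $\CA^H \cong \CA^{\ell-1}_{\ell-1}(r) = \CA(G(r,1,\ell-1))$ in case (a), $\CA^H \cong \CA^{k-1}_{\ell-1}(r)$ in case (b), $\CA^H \cong \CA^{k}_{\ell-1}(r)$ in case (c), and $\CA^H \cong \CA^{k+1}_{\ell-1}(r)$ in case (d). The degenerate parameter values $k \in \{0, 1, \ell-1, \ell\}$ and small $\ell$ are inspected separately but produce no surprises. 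This case analysis is the bulk of the work and the step where a slip is easiest to make.

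\emph{Part (i).} I argue by downward induction on $k$. The base case $k = \ell$ is $\CA^\ell_\ell(r) = \CA(G(r,1,\ell))$, which by Terao's theorem~\cite{terao:freeI} is free with exponents equal to the coexponents $\{1, r+1, 2r+1, \dots, (\ell-1)r+1\}$ of $G(r,1,\ell)$; this matches the asserted formula at $k = \ell$. For the inductive step, fix $0 \le k \le \ell - 1$ and consider the triple $(\CA, \CA', \CA'')$ with $\CA = \CA^{k+1}_\ell(r)$, distinguished hyperplane $H_0 = \ker x_{k+1}$, $\CA' = \CA \setminus \{H_0\} = \CA^k_\ell(r)$ and $\CA'' = \CA^{H_0}$. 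By the inductive hypothesis $\CA$ is free with $\exp\CA = \{1, r+1, \dots, (\ell-2)r+1, (\ell-1)r - \ell + k + 2\}$, and by case (a) of part (ii) we have $\CA'' \cong \CA(G(r,1,\ell-1))$, which is free with $\exp\CA'' = \{1, r+1, \dots, (\ell-2)r+1\}$. Since $\exp\CA''$ is obtained from $\exp\CA$ by deleting the single element $(\ell-1)r - \ell + k + 2$, Theorem~\ref{thm:add-del} (two of the three statements imply the third) applies and yields that $\CA' = \CA^k_\ell(r)$ is free with $\exp\CA' = \{1, r+1, \dots, (\ell-2)r+1, (\ell-1)r - \ell + k + 1\}$, completing the induction.

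As a byproduct, the final step $k = 0$ reproves that $\CA(G(r,r,\ell))$ is free with exponents the coexponents of $G(r,r,\ell)$. The only verification needed in part (i) is the multiset matching of exponents demanded by Addition--Deletion, which is immediate from the explicit formula; so the real content of the proposition, and its principal obstacle, is the restriction computation carried out in part (ii).
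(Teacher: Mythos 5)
The paper contains no proof of Proposition \ref{prop:intermediate} to compare against: it is recalled from \cite[Props.~2.11, 2.13]{orliksolomon:unitaryreflectiongroups} (cf.\ \cite[Props.~6.82, 6.85]{orlikterao:arrangements}). Your argument is therefore an independent proof, and it is essentially correct. In (ii), the symmetry reduction is valid (the group generated by $S_k\times S_{\ell-k}$ and the diagonal matrices with $r$-th root of unity entries preserves $\CA^k_\ell(r)$ and is transitive on your four classes), and your four answers agree with Table \ref{table2}; your class (a) is the table's coordinate-hyperplane row, which is to be read with $1\le i\le k$. The one point you should spell out, since it is where all the content of (ii) sits, is the folding through a third coordinate: on $H=\ker(x_i-\zeta^n x_j)$ one has $x_a-\zeta^m x_j\mapsto x_a-\zeta^{m-n}x_i$, so the two $r$-element families indexed by $(a,i)$ and $(a,j)$ fold onto the single family for $(a,i)$, and one should also note that no further coincidences among the restricted forms occur; together with your observation that the forms $x_i-\zeta^m x_j$, $m\neq n$, all restrict to nonzero multiples of $x_i$ (here $r\ge 2$ is used), this gives exactly the counts $k-1$, $k$, $k+1$, $\ell-1$ of coordinate hyperplanes in the four cases. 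In (i), the downward induction is sound: $\CA^{k+1}_\ell(r)\setminus\{\ker x_{k+1}\}=\CA^k_\ell(r)$, the restriction to $\ker x_{k+1}$ is $\CA^{\ell-1}_{\ell-1}(r)=\CA(G(r,1,\ell-1))$ by (ii), and the exponent multisets match as Theorem \ref{thm:add-del} requires, the last exponent dropping from $(\ell-1)r-\ell+(k+1)+1$ to $(\ell-1)r-\ell+k+1$; the base case rests on Terao's theorem and the coexponents $1,r+1,\dots,(\ell-1)r+1$ of $G(r,1,\ell)$, which the paper likewise takes as known (\S\ref{ssect:refl}). Methodologically, your route obtains freeness of all the intermediate arrangements by repeated deletion of coordinate hyperplanes starting from the reflection arrangement $\CA^\ell_\ell(r)$ --- precisely the mechanism the paper itself uses later in the proof of Theorem \ref{thm:akl}(ii) to establish recursive freeness --- and this is perfectly compatible with Lemma \ref{lem:notindfree}, since deletion, unlike induction of hyperplanes, does not require the intermediate stages to be inductively free; what it buys is brevity, at the cost of using Terao's freeness theorem and the coexponents of $G(r,1,\ell)$ as external inputs rather than arguing directly as in the cited sources.
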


\begin{table}[ht!b]
\renewcommand{\arraystretch}{1.5}
\begin{tabular}{llll}\hline
 $k$ & \multicolumn{2}{l}{$\alpha_H$} & Type of $\CA^H$\\ \hline
 $0$ & arbitrary & & $\CA^1_{\ell - 1}(r)$\\
 $1, \ldots, \ell - 1$ & $x_i - \zeta x_j$ & $1 \leq i < j \leq k < \ell$ & $\CA^{k - 1}_{\ell - 1}(r)$\\
 $1, \ldots, \ell - 1$ & $x_i - \zeta x_j$ & $1 \leq i \leq k < j \leq \ell$ & $\CA^k_{\ell - 1}(r)$\\
 $1, \ldots, \ell - 1$ & $x_i - \zeta x_j$ & $1 \leq k < i < j \leq \ell$ & $\CA^{k + 1}_{\ell - 1}(r)$\\
 $1, \ldots, \ell - 1$ & $x_i$ & $1 \leq i \leq \ell$ & $\CA^{\ell - 1}_{\ell - 1}(r)$\\
 $\ell$ & arbitrary & & $\CA^{\ell - 1}_{\ell - 1}(r)$\\ \hline
\end{tabular}
\bigskip
\caption{Restriction types of $\CA^k_\ell(r)$}
\label{table2}
\end{table}

The following example shows that every  intermediate 
arrangement does occur as a restriction of
the reflection arrangement of $W = G(r,r,\ell)$
for a suitable $\ell$.

\begin{example}
\label{ex:intermediate}
Let $1 \leq n \leq p$, $\ell = p + n$ and $r \geq 3$ and let $W = G(r,r,\ell)$. For an $r\th$ root of unity $\zeta$ and for $1
\leq i < j \leq \ell$ let $H_{i,j}(\zeta) = \ker(x_i - \zeta x_j)$ be a hyperplane in $\CA = \CA^0_\ell(r) = \CA(W)$. 
Define $X := \bigcap_{i = 1}^{n}H_{2i-1, 2i}(\zeta) \in L(\CA)$.
Now $\dim X = \ell - n = p$, 
and by \cite[Prop.\ 2.14]{orliksolomon:unitaryreflectiongroups}
(cf.~\cite[Prop.~6.84]{orlikterao:arrangements}), 
we have $\CA^X \cong \CA^n_p(r)$.
\end{example}

\begin{lemma}
\label{lem:indfree1}
$\CA^{\ell - 2}_\ell(r)$ is inductively free.
\end{lemma}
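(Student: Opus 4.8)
The plan is to prove inductive freeness of $\CA := \CA^{\ell-2}_\ell(r)$ by induction on $\ell$, using the Addition-Deletion setup together with the restriction data in Table \ref{table2} and the exponents from Proposition \ref{prop:intermediate}(i). The base case is small: for $\ell = 2$ the arrangement $\CA^0_2(r)$ is a central $2$-arrangement, hence inductively free by Lemma \ref{lem:rank1and2}. So assume $\ell \ge 3$ and that the statement holds for $\ell - 1$; in fact I expect to need the stronger inductive hypothesis that \emph{all} the intermediate arrangements showing up below in dimension $\ell-1$ — namely $\CA^{\ell-3}_{\ell-1}(r)$, $\CA^{\ell-2}_{\ell-1}(r)$ and $\CA^{\ell-1}_{\ell-1}(r) = \CA(G(r,1,\ell-1))$ — are inductively free. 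The last of these is a reflection arrangement of a group with no bad irreducible factor, hence inductively free by Theorem \ref{thm:indfree1}(i); for the other two I would either fold them into a simultaneous induction on $\ell$ for the family $\{\CA^{\ell-2}_\ell(r), \CA^{\ell-1}_\ell(r)\}$, or invoke an auxiliary lemma proved just before this one.

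The inductive step runs through an induction table (in the sense of Remark \ref{rem:indtable}) that adds hyperplanes to a suitable inductively free starting arrangement $\CA_0$ until one reaches $\CA^{\ell-2}_\ell(r)$. A natural choice for $\CA_0$ is $\CA^{\ell-1}_\ell(r) = \CA(G(r,1,\ell))$, which is inductively free by Theorem \ref{thm:indfree1}(i), and then to \emph{delete} hyperplanes of the form $x_i$; but since inductive freeness is an addition process, I would instead order things so that one builds up $\CA^{\ell-2}_\ell(r)$ by adding the remaining coordinate hyperplanes $x_{\ell-1}$ (and the braid-type hyperplanes) one at a time, starting from $\CA^0_\ell(r) = \CA(G(r,r,\ell))$ — but $\CA(G(r,r,\ell))$ is \emph{not} inductively free for $r,\ell\ge 3$, so that is not viable either. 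The workable route is: start from an inductively free arrangement in which one of the coordinate hyperplanes is already present, say build $\CA^{\ell-2}_\ell(r)$ from $\CA^{\ell-1}_\ell(r)$ by the reverse of the deletion process, i.e. check that in the triple $(\CA^{\ell-1}_\ell(r), \CA^{\ell-2}_\ell(r), (\CA^{\ell-1}_\ell(r))^{H_0})$ with $H_0 = \ker x_\ell$ one has, by Table \ref{table2} (the $x_i$-row), that $(\CA^{\ell-1}_\ell(r))^{H_0} \cong \CA^{\ell-1}_{\ell-1}(r)$ is inductively free, and by Proposition \ref{prop:intermediate}(i) that $\exp \CA^{\ell-1}_\ell(r) = \{1, r+1, \ldots, (\ell-2)r+1, (\ell-1)r-1\}$ while $\exp \CA^{\ell-2}_\ell(r) = \{1, r+1, \ldots, (\ell-2)r+1, (\ell-1)r-2\}$ and $\exp \CA^{\ell-1}_{\ell-1}(r) = \{1, r+1, \ldots, (\ell-3)r+1, (\ell-2)r\}$; one then verifies the numerical compatibility of Theorem \ref{thm:add-del} in the direction ``$\CA$ free $+$ $\CA''$ free $\Rightarrow \CA'$ free'' and, crucially, the inclusion $\exp (\CA^{\ell-1}_\ell(r))'' = \exp \CA^{\ell-1}_{\ell-1}(r) \subseteq \exp \CA^{\ell-2}_\ell(r)$ required by Definition \ref{def:indfree}(ii) for inductive (not merely recursive) freeness.

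Concretely, I would present this as a single induction table for $\CA^{\ell-2}_\ell(r)$: begin with $\CA_0 = \CA^{\ell-1}_\ell(r)$, which is inductively free with the exponents above; the hyperplanes to be removed to pass from $\CA^{\ell-1}_\ell(r)$ down to $\CA^{\ell-2}_\ell(r)$ are exactly those of a suitable coordinate form, and each intermediate restriction is, by Table \ref{table2}, one of the inductively free arrangements $\CA^{k}_{\ell-1}(r)$ covered by the inductive hypothesis; then read the table ``backwards'' to exhibit $\CA^{\ell-2}_\ell(r)$ itself as obtained by additions. Checking in each row that $\exp \CA_i'' \subseteq \exp \CA_i'$ is a short arithmetic verification using the explicit exponent formula, and the restriction types are handed to us by Table \ref{table2}. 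The main obstacle is getting the bookkeeping of which hyperplane to add at each step exactly right so that (a) each partial arrangement is again of the form $\CA^{k}_\ell(r)$ or a product thereof with empty factors — so that Proposition \ref{prop:intermediate}(i) applies and gives the exponents — and (b) the exponent-inclusion condition $\exp \CA_i'' \subseteq \exp \CA_i'$ holds at \emph{every} step and not just numerically; a secondary subtlety is making sure the inductive hypothesis is formulated for enough intermediate arrangements in dimension $\ell-1$ (i.e. for $\CA^{\ell-3}_{\ell-1}(r)$ and $\CA^{\ell-2}_{\ell-1}(r)$ as well), which is why I would either combine this with the companion statement about $\CA^{\ell-1}_\ell(r)$ into one simultaneous induction or cite it from a preceding lemma.
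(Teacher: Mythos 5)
Your overall framing (induction on $\ell$, base case via Lemma \ref{lem:rank1and2}, simultaneous treatment of the neighbouring intermediate arrangements) matches the paper, but the core of your inductive step does not work. First, the identification $\CA^{\ell-1}_\ell(r) = \CA(G(r,1,\ell))$ is false: the reflection arrangement of $G(r,1,\ell)$ is $\CA^{\ell}_\ell(r)$, and for $0<k<\ell$ the arrangements $\CA^k_\ell(r)$ are not reflection arrangements at all, so Theorem \ref{thm:indfree1}(i) does not hand you inductive freeness of your starting arrangement $\CA_0=\CA^{\ell-1}_\ell(r)$. In the paper that statement is Corollary \ref{indfree2}, which is deduced \emph{from} the lemma you are trying to prove (at the same rank $\ell$), so using it here is circular and is not covered by an induction hypothesis in $\ell$. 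Second, and more fundamentally, your plan to pass from $\CA^{\ell-1}_\ell(r)$ down to $\CA^{\ell-2}_\ell(r)$ by deleting $\ker x_{\ell-1}$ and "verifying the numerical compatibility of Theorem \ref{thm:add-del} in the direction $\CA$ free $+$ $\CA''$ free $\Rightarrow \CA'$ free" can only yield freeness (or, with inductively free ingredients, recursive freeness via Definition \ref{def:recfree}(iii)); Definition \ref{def:indfree} has no deletion clause, so no amount of exponent bookkeeping on that triple certifies that the \emph{deletion} is inductively free. "Reading the table backwards" does not convert deletions into the additions the definition requires: to witness inductive freeness you must exhibit an inductively free subarrangement of $\CA^{\ell-2}_\ell(r)$ and add hyperplanes up to the full arrangement, with every intermediate restriction inductively free and the exponent inclusions holding.

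That is exactly what the paper does, and what is missing from your proposal: it starts from $\CA_{-1}=\CA^{\ell-3}_{\ell-1}(r)\times\Phi_1$, inductively free by the induction hypothesis on $\ell$ together with Proposition \ref{prop:product-indfree}, and then adds the $(\ell-1)r+1$ hyperplanes $\ker(x_{\ell-2})$ and $\ker(x_i-\zeta^m x_\ell)$, $1\le i\le \ell-1$, $0\le m<r$, in an explicit order; the successive restrictions are computed directly (they are $\CA^{\ell-2}_{\ell-2}(r)\times\Phi_1$, $\CA^{\ell-2}_{\ell-1}(r)$, and $\CA^{\ell-1}_{\ell-1}(r)$) and the exponent inclusions are checked row by row using Proposition \ref{prop:intermediate}(i) and Theorem \ref{thm:add-del}. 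Note also that your exponent formulas are off: by Proposition \ref{prop:intermediate}(i) the last exponent of $\CA^k_\ell(r)$ is $(\ell-1)r-\ell+k+1$, so for instance $\exp\CA^{\ell-1}_\ell(r)$ ends in $(\ell-1)r$, not $(\ell-1)r-1$, and $\exp\CA^{\ell-1}_{\ell-1}(r)$ ends in $(\ell-2)r+1$, not $(\ell-2)r$; these slips would have to be repaired even in a recursive-freeness argument. As it stands, your argument establishes (at best) that $\CA^{\ell-2}_\ell(r)$ is free, not that it is inductively free.
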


\begin{proof}
We argue by induction on $\ell$.
As the result is clear for $\ell = 2$, 
by Lemma \ref{lem:rank1and2},
we may assume that $\ell \geq 3$ and 
that $\CA^{\ell - 3}_{\ell - 1}(r)$ is inductively free.
The subarrangement 
$\CA^{\ell - 3}_{\ell - 1}(r) \times \Phi_1$ of $\CA^{\ell - 2}_\ell(r)$ 
is inductively free with exponents
$\{\exp\CA^{\ell - 3}_{\ell - 1}(r), 0\}$, 
by Proposition \ref{prop:product-indfree}.
Now we use induction of hyperplanes to show that 
$\CA^{\ell - 2}_\ell(r)$ is inductively free, see Remark \ref{rem:indtable}.

The defining polynomial of $\CA^{\ell - 3}_{\ell - 1}(r)$ is given by
  $$Q^{\ell - 3}_{\ell - 1} := x_1x_2 \cdots x_{\ell-3} \prod\limits_{1 \leq i < j \leq \ell - 1}(x_i^r - x_j^r) = x_1x_2
  \cdots x_{\ell-3} \prod\limits_{1 \leq i < j \leq \ell - 1}\left(\prod\limits_{m = 0}^{r - 1}(x_i - \zeta^mx_j)\right).$$
We now add to the inductively free subarrangement $\CA^{\ell - 3}_{\ell - 1}(r) \times \Phi_1$ of $\CA^{\ell - 2}_\ell(r)$
the hyperplanes $\ker(x_{\ell - 2})$ and $\ker(x_i - \zeta^mx_\ell)$ for $1 \leq i \leq \ell - 1$ and $0 \leq m < r$
successively.

The additional factors (other than the ones in $Q^{\ell - 3}_{\ell - 1}$) of the defining polynomial $Q^{\ell - 2}_\ell$ of
$\CA^{\ell - 2}_\ell(r)$ are $P := \{x_{\ell - 2}, x_i - \zeta^mx_\ell\;|\;1 \leq i \leq \ell - 1, 0 \leq m < r\}$.
Define $\CA_{-1} := \CA^{\ell - 3}_{\ell - 1}(r) \times \Phi_1$ and $\CA_i := \CA_{i - 1} \cup \{H_i\}$ for $0 \leq i \leq (\ell
- 1)r$, where $H_0 := \ker(x_{\ell - 2})$ and $H_{(k - 1)r + j + 1} := \ker(x_{k} - \zeta^jx_\ell)$ for $1 \leq k \leq \ell - 1$
and $0 \leq j < r$. Thus we have $\CA_{(\ell - 1)r} = \CA^{\ell - 2}_\ell(r)$.
In Table \ref{table1},
we display this induction of hyperplanes, i.e.\ we record in the $i\th$ row
(starting with row number $0$) the information of the $i\th$ step of the induction process, by listing $\exp\CA_i' =
\exp\CA_{i - 1}$, the defining form $\alpha_{H_i}$ of $H_i$, as well as $\exp\CA_i'' = \exp\CA_i^{H_i}$. Observe that
$\exp\CA_0' = \exp\CA_{-1} = \{\exp(\CA^{\ell - 2}_{\ell - 2}(r) \times \Phi_1), (\ell - 2)r - 1\}$ in the first row of the table, using
Proposition \ref{prop:intermediate}(i).

Thus, the restriction of $\CA_0$ to $\ker(x_{\ell - 2})$ results in the substitution $x_{\ell - 2} = 0$ and restricting an
intermediate subarrangement to $\ker(x_i - \zeta^mx_\ell)$ results in the substitution $x_\ell = \zeta^{-m}x_i$.
So we get 
$\{x_1, \ldots, x_{\ell - 3}, x_{\ell - 1}, x_j - \zeta^mx_k \;|\; 1 \leq j < k \leq \ell - 1, k \neq \ell - 2, 0 \leq m < r - 1\}$ 
as defining terms for $\CA_0^{H_0}$, so that  $\CA_0'' \cong \CA^{\ell - 2}_{\ell - 2}(r) \times \Phi_1$.
The defining terms for $\CA_i''$ with $1 \leq i \leq (\ell - 2)r + 1$ are $\{x_1, \ldots, x_{\ell - 2}, x_j - \zeta^mx_k \;|\;
1 \leq j < k \leq \ell - 1, 0 \leq m < r - 1\}$, hence $\CA_i'' \cong \CA^{\ell - 2}_{\ell - 1}(r)$ for all $1 \leq i \leq (\ell
- 2)r + 1$.
For $(\ell - 2)r + 2 \leq i \leq (\ell - 1)r$, we get $\{x_1, \ldots, x_{\ell - 1}, x_j - \zeta^mx_k \;|\; 1 \leq j < k \leq \ell
- 1, 0 \leq m < r - 1\}$ as defining terms for $\CA_i''$, so $\CA_i'' \cong \CA^{\ell - 1}_{\ell - 1}(r)$ for all $(\ell - 2)r +
2 \leq i \leq (\ell - 1)r$.

\begin{table}[ht!b]
\renewcommand{\arraystretch}{1.5}
\begin{tabular}{lll}\hline
  $\exp\CA_i'$ & $\alpha_{H_i}$ & $\exp\CA_i''$\\ \hline\hline
  $\exp(\CA^{\ell - 2}_{\ell - 2}(r) \times \Phi_1), (\ell - 2)r - 1$ & $x_{\ell - 2}$ & $\exp(\CA^{\ell - 2}_{\ell - 2}(r) \times \Phi_1)$ \\
  $\exp\CA^{\ell - 2}_{\ell - 1}(r), 0$ & $x_1 - x_\ell$ & $\exp\CA^{\ell - 2}_{\ell - 1}(r)$ \\
  $\vdots$ & $\vdots$ & $\vdots$ \\
  $\exp\CA^{\ell - 2}_{\ell - 1}(r), r - 1$ & $x_1 - \zeta^{r - 1}x_\ell$ & $\exp\CA^{\ell - 2}_{\ell - 1}(r)$ \\
  $\exp\CA^{\ell - 2}_{\ell - 1}(r), r$ & $x_2 - x_\ell$ & $\exp\CA^{\ell - 2}_{\ell - 1}(r)$ \\
  $\vdots$ & $\vdots$ & $\vdots$ \\
  $\exp\CA^{\ell - 2}_{\ell - 1}(r), 2r - 1$ & $x_2 - \zeta^{r - 1}x_\ell$ & $\exp\CA^{\ell - 2}_{\ell - 1}(r)$ \\
  $\vdots$ & $\vdots$ & $\vdots$ \\
  $\exp\CA^{\ell - 2}_{\ell - 1}(r), (\ell - 2)r$ & $x_{\ell - 1} - x_\ell$ & $\exp\CA^{\ell - 2}_{\ell - 1}(r)$ \\
  $\exp\CA^{\ell - 1}_{\ell - 1}(r), (\ell - 2)r$ & $x_{\ell - 1} - \zeta x_\ell$ & $\exp\CA^{\ell - 1}_{\ell - 1}(r)$ \\
  $\vdots$ & $\vdots$ & $\vdots$ \\
  $\exp\CA^{\ell - 1}_{\ell - 1}(r), (\ell - 1)r - 2$ & $x_{\ell - 1} - \zeta^{r - 1}x_\ell$ & $\exp\CA^{\ell - 1}_{\ell - 1}(r)$ \\
  $\exp\CA^{\ell - 1}_{\ell - 1}(r), (\ell - 1)r - 1$ && \\ \hline
\end{tabular}
\smallskip
\caption{Induction Table for $\CA^{\ell - 2}_\ell(r)$}
\label{table1}
\end{table}

The exponents in Table \ref{table1} can be determined using 
Theorem \ref{thm:add-del} and Proposition \ref{prop:intermediate}.
\end{proof}

\begin{corollary}
\label{indfree2}
$\CA^{\ell - 1}_\ell(r)$ is inductively free.
\end{corollary}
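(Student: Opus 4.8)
The plan is to deduce this from Lemma \ref{lem:indfree1} together with the Addition--Deletion machinery, running one more round of induction of hyperplanes exactly as in the proof of that lemma. Indeed, $\CA^{\ell-1}_\ell(r)$ is obtained from $\CA^{\ell-2}_\ell(r)$ by adding a single coordinate hyperplane: from the defining polynomials, $Q(\CA^{\ell-1}_\ell(r)) = x_{\ell-1}\cdot Q(\CA^{\ell-2}_\ell(r))$, so with $H_0 := \ker(x_{\ell-1})$ we have a triple $(\CA, \CA', \CA'')$ with $\CA = \CA^{\ell-1}_\ell(r)$, $\CA' = \CA^{\ell-2}_\ell(r)$, and $\CA'' = \CA^{H_0}$.

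First I would identify $\CA''$. Restricting to $\ker(x_{\ell-1})$ amounts to the substitution $x_{\ell-1} = 0$ in the remaining defining forms of $\CA^{\ell-2}_\ell(r)$; the factors $x_1,\dots,x_{\ell-2}$ survive, the factors $x_i - \zeta^n x_{\ell-1}$ all collapse to (a scalar multiple of) $x_i$, and the factors $x_i - \zeta^n x_j$ with $i,j \neq \ell-1$ are unchanged. Hence the restriction has defining terms $\{x_1,\dots,x_{\ell-2}\} \cup \{x_i - \zeta^n x_j \mid 1 \le i < j \le \ell,\ i,j\neq \ell-1,\ 0\le n<r\}$, i.e. $\CA'' \cong \CA^{\ell-2}_{\ell-1}(r)$ on the $(\ell-1)$-dimensional space $\ker(x_{\ell-1})$. (Alternatively this is read off directly from Table \ref{table2}, the row with $\alpha_H = x_i$ giving type $\CA^{\ell-1}_{\ell-1}(r)$ for the reflection-arrangement case $k=\ell$; for general $k$ the row $\alpha_H = x_i$ should be handled with the same substitution argument, yielding $\CA^{\ell-1}_{\ell-1}(r)$, but in our situation $\CA'$ has $k = \ell-2$ and the restriction to a coordinate hyperplane lowers $\ell$ by one keeping $k$, giving $\CA^{\ell-2}_{\ell-1}(r)$.)

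Next I would invoke inductive freeness of both $\CA'$ and $\CA''$: $\CA' = \CA^{\ell-2}_\ell(r)$ is inductively free by Lemma \ref{lem:indfree1}, and $\CA'' \cong \CA^{\ell-2}_{\ell-1}(r)$ is inductively free by Lemma \ref{lem:indfree1} applied with $\ell$ replaced by $\ell-1$ (the case $\ell = 2$ giving $\CA'' $ a $1$-arrangement, which is inductively free by Lemma \ref{lem:rank1and2}). By Proposition \ref{prop:intermediate}(i), $\exp\CA' = \{1, r+1, \dots, (\ell-2)r+1, (\ell-1)r - \ell + (\ell-2) + 1\} = \{1, r+1, \dots, (\ell-2)r+1, (\ell-1)r - 1\}$, while $\exp\CA'' = \{1, r+1, \dots, (\ell-3)r+1, (\ell-2)r - 1\}$. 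One checks $\exp\CA'' \subseteq \exp\CA'$: every entry of $\exp\CA''$ occurs among $\{1, r+1, \dots, (\ell-2)r+1\} \subseteq \exp\CA'$ once one notes $(\ell-2)r-1 = (\ell-3)r + (r-1)$ need not literally appear — here one must be slightly careful, and I would instead argue via Theorem \ref{thm:add-del}: $\CA'$ is free with $\exp\CA'$ as above, $\CA''$ is free with $\exp\CA''$ as above, and since $\exp\CA''$ is obtained from $\exp\CA'$ by deleting the single entry $(\ell-1)r-1$ and decreasing nothing — indeed $\exp\CA' = \exp\CA'' \cup \{(\ell-1)r-1\}$ as multisets since $\exp\CA'' = \{1,r+1,\dots,(\ell-3)r+1,(\ell-2)r-1\}$ and $\exp\CA' \setminus \{(\ell-1)r-1\} = \{1,r+1,\dots,(\ell-2)r+1\}$; these two $(\ell-1)$-element multisets agree iff $(\ell-2)r-1 = (\ell-2)r+1$, which is false. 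So Addition does not apply with $H_0 = \ker(x_{\ell-1})$ in this naive form, and the honest route is the induction-of-hyperplanes argument below.

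The genuinely correct approach, and the one I would write up, mirrors Lemma \ref{lem:indfree1} exactly: start from the inductively free arrangement $\CA^{\ell-1}_{\ell-1}(r) \times \Phi_1 = \CA(G(r,1,\ell-1)) \times \Phi_1$, which is inductively free by Terao's theorem (reflection arrangements of well-generated groups, here Coxeter-like $G(r,1,\ell-1)$, are inductively free — indeed $\CA(G(r,1,m))$ is inductively free by Theorem \ref{thm:indfree1}(i)) and Proposition \ref{prop:product-indfree}, then add the hyperplanes $\ker(x_i - \zeta^m x_\ell)$ for $1 \le i \le \ell-1$, $0 \le m < r$ one at a time. At each stage the restriction $\CA_i^{H_i}$ is of type $\CA^{\ell-1}_{\ell-1}(r)$ (the substitution $x_\ell = \zeta^{-m}x_i$ identifies two coordinates, keeping all $\ell-1$ of the surviving coordinate hyperplanes and the braid-type factors among $x_1,\dots,x_{\ell-1}$), hence inductively free, and the exponent bookkeeping via Theorem \ref{thm:add-del} and Proposition \ref{prop:intermediate}(i) shows $\exp\CA_i'' \subseteq \exp\CA_i'$ throughout; after all $(\ell-1)r$ additions we reach $\CA^{\ell-1}_\ell(r)$. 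I expect the main obstacle to be purely organizational: setting up the induction table cleanly and verifying the exponent-containment at the one transition where the last exponent of the running arrangement jumps (as in the transition from the $\CA^{\ell-2}_{\ell-1}(r)$-block to the $\CA^{\ell-1}_{\ell-1}(r)$-block in Table \ref{table1}); the geometric identification of each restriction is routine once the coordinate substitutions are written out.
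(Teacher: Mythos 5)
Your main line of argument is derailed by a concrete computational error in identifying $\CA''$. With $\CA = \CA^{\ell-1}_\ell(r)$, $H_0 = \ker(x_{\ell-1})$ and $\CA' = \CA^{\ell-2}_\ell(r)$, the substitution $x_{\ell-1}=0$ must also be applied to the $r$ factors $x_{\ell-1} - \zeta^n x_\ell$ of $Q(\CA')$; these all collapse to scalar multiples of $x_\ell$, so $x_\ell$ becomes a coordinate hyperplane of the restriction. Hence $\CA''$ contains all $\ell-1$ coordinate hyperplanes $x_1,\dots,x_{\ell-2},x_\ell$ together with the forms $x_i - \zeta^n x_j$ for $i,j\neq \ell-1$, i.e.\ $\CA'' \cong \CA^{\ell-1}_{\ell-1}(r)$, not $\CA^{\ell-2}_{\ell-1}(r)$. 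This is exactly what the row $\alpha_H = x_i$ (for $k = 1,\dots,\ell-1$) of Table \ref{table2} says; the relevant $k$ when reading that table is the one of $\CA$ itself, namely $k=\ell-1$, and your ``restriction keeps $k$'' heuristic is what goes wrong. With the correct $\CA''$, Proposition \ref{prop:intermediate}(i) gives $\exp\CA'' = \{1, r+1, \ldots, (\ell-2)r+1\}$ and $\exp\CA' = \{1, r+1, \ldots, (\ell-2)r+1, (\ell-1)r-1\}$, so $\exp\CA'' \subseteq \exp\CA'$; since $\CA'$ is inductively free by Lemma \ref{lem:indfree1} and $\CA'' \cong \CA(G(r,1,\ell-1))$ is inductively free by Theorem \ref{thm:indfree1}(i), Definition \ref{def:indfree}(ii) yields inductive freeness of $\CA$ in a single step. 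That is precisely the paper's proof --- the route you set up first and then discarded on the basis of the miscomputed restriction. (Incidentally, even for your claimed type $\CA^{\ell-2}_{\ell-1}(r)$ the last exponent would be $(\ell-2)r$, not $(\ell-2)r-1$.)

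Your fallback induction-of-hyperplanes argument is in fact viable: starting from $\CA^{\ell-1}_{\ell-1}(r)\times\Phi_1$ and adding the $(\ell-1)r$ hyperplanes $\ker(x_i - \zeta^m x_\ell)$, every restriction along the way is of type $\CA^{\ell-1}_{\ell-1}(r)$, so the required containment $\{1,r+1,\dots,(\ell-2)r+1\}\subseteq\{1,r+1,\dots,(\ell-2)r+1,j\}$ holds at every step while the last exponent $j$ climbs from $0$ to $(\ell-1)r$; in particular there is no transition with a jump of the kind occurring in Table \ref{table1}, so the worry you flag does not arise. But as written this is only a sketch (the bookkeeping is deferred to ``I expect''), and it was adopted only because the much shorter addition argument --- which your own setup already provides once $\CA''$ is identified correctly --- was wrongly rejected.
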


\begin{proof}
The arrangement $\CA = \CA^{\ell - 1}_{\ell}(r)$ can be obtained from $\CA^{\ell - 2}_{\ell}(r)$ by adding the hyperplane
$\ker(x_{\ell - 1})$. Regarding this hyperplane we get $\CA' = \CA^{\ell - 2}_{\ell}(r)$ and
$\CA'' \cong \CA^{\ell - 1}_{\ell - 1}(r)$, by Proposition \ref{prop:intermediate}(ii). 
Also $\exp\CA^{\ell - 2}_{\ell}(r) = \{\exp\CA^{\ell - 1}_{\ell - 1}(r), (\ell - 1)r - 1\}$, 
by Proposition \ref{prop:intermediate}(i).
Thus $\exp \CA'' \subseteq \exp \CA'$.
Moreover, it follows from Lemma \ref{lem:indfree1} and 
Theorem \ref{thm:indfree1} that $\CA'$ and $\CA''$ are
inductively free, respectively.
Thus $\CA$ is inductively free.
\end{proof}

\begin{lemma}
\label{lem:notindfree}
$\CA^k_\ell(r)$ is not inductively free for $0 \le k \le \ell - 3$ and $r \geq 3$.
\end{lemma}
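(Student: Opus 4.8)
The plan is to argue by induction on $\ell$ and to use Lemma~\ref{lem:q} as the key obstruction: we will exhibit a hyperplane $H_0 \in \CA = \CA^k_\ell(r)$ for which $\CA$ and $\CA'' = \CA^{H_0}$ are free but $\exp\CA'' \not\subseteq \exp\CA$, forcing $\CA' = \CA \setminus \{H_0\}$ to be non-free, hence $\CA$ non-inductively-free (since inductive freeness of $\CA$ would require \emph{some} $H_0$ with both $\CA'$ and $\CA''$ inductively free). Actually, to be careful: the definition of inductive freeness requires the \emph{existence} of a good hyperplane, so we cannot simply pick one bad $H_0$; instead we must show that \emph{every} choice of $H_0 \in \CA$ fails, i.e.\ for each $H_0$ either $\CA''$ is not inductively free or $\exp\CA'' \not\subseteq \exp\CA'$ or $\CA'$ is not inductively free. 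The cleaner route is induction: assume $\CA^{k'}_{\ell-1}(r)$ is not inductively free whenever $0 \le k' \le \ell - 4$, and leverage the restriction table (Table~\ref{table2}) to control what $\CA''$ can be.

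Here is the induction in more detail. The base case is $\ell = 3$, $k = 0$: $\CA^0_3(r) = \CA(G(r,r,3))$ is not inductively free by Theorem~\ref{thm:indfree1}(i) for $r \ge 3$. For the inductive step, fix $0 \le k \le \ell - 3$ and $\ell \ge 4$, and suppose for contradiction that $\CA = \CA^k_\ell(r)$ is inductively free via some $H_0 \in \CA$; then $\CA' = \CA \setminus \{H_0\}$ and $\CA'' = \CA^{H_0}$ are inductively free with $\exp\CA'' \subseteq \exp\CA'$. By Proposition~\ref{prop:intermediate}(i), $\exp\CA = \{1, r+1, \ldots, (\ell-2)r+1, (\ell-1)r - \ell + k + 1\}$ and correspondingly $\exp\CA' = \{1, r+1, \ldots, (\ell-2)r+1, (\ell-1)r - \ell + k\}$. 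Now run through the rows of Table~\ref{table2}: since $k \le \ell - 3 < \ell$, the possible types of $\CA'' = \CA^{H_0}$ are $\CA^{k-1}_{\ell-1}(r)$ (when $k \ge 2$ and $\alpha_{H_0} = x_i - \zeta x_j$ with $i<j\le k$), $\CA^k_{\ell-1}(r)$, $\CA^{k+1}_{\ell-1}(r)$, or $\CA^{\ell-1}_{\ell-1}(r)$ (when $k \ge 1$ and $\alpha_{H_0} = x_i$). Using Proposition~\ref{prop:intermediate}(i) to read off $\exp\CA''$ in each case, one checks that the top exponent of $\CA''$ is $(\ell-2)r - (\ell-1) + k''+ 1$ with $k'' \in \{k-1,k,k+1,\ell-1\}$, while $\exp\CA'$ has top exponent $(\ell-1)r - \ell + k$; comparing multisets forces $\exp\CA'' \subseteq \exp\CA'$ to hold only in the subcases where $\CA'' \cong \CA^{k+1}_{\ell-1}(r)$ (the case $\CA'' \cong \CA^{\ell-1}_{\ell-1}(r)$ is the reflection arrangement of $G(r,1,\ell-1)$, which is inductively free, but one must verify its exponents are \emph{not} contained in $\exp\CA'$ — this is the containment check). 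So the only way $\CA$ could be inductively free is with $\CA'' \cong \CA^{k+1}_{\ell-1}(r)$, i.e.\ $\alpha_{H_0} = x_i - \zeta x_j$ with $k < i < j \le \ell$.

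Now $\CA' = \CA \setminus \{H_0\}$ must be inductively free. The plan is to iterate: strip off more hyperplanes. Since by the previous paragraph every inductive step on $\CA^{k}_\ell(r)$ (and on the intermediate arrangements obtained along the way, which remain of the form "$\CA^k_\ell(r)$ minus some hyperplanes of the $x_i - \zeta x_j$ type") is forced to use hyperplanes $\ker(x_i - \zeta x_j)$ with indices in the "upper block", one shows that after removing all such hyperplanes the arrangement still cannot reach an inductively free configuration, because the "floor" arrangement we are forced toward is (a product involving) $\CA^{k'}_{\ell-1}(r)$ or $\CA^{k}_{\ell-1}(r)$ with $k' \le \ell - 4$, which is not inductively free by the induction hypothesis. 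Concretely: if $\CA'$ were inductively free, applying the inductive step once more to $\CA'$ yields a restriction $\CA'^{H_1}$, and by Table~\ref{table2} together with Proposition~\ref{prop:intermediate} this restriction is one of the intermediate arrangements in dimension $\ell - 1$ with middle index at most $k+1 \le \ell - 2$; tracking the exponents and invoking Lemma~\ref{lem:q} shows the deletion forces non-freeness unless we keep descending, and the descent terminates at an arrangement that the induction hypothesis (or Theorem~\ref{thm:indfree1}) declares not inductively free, a contradiction.

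\textbf{Main obstacle.} The delicate part is the bookkeeping in the inductive step: one must show that \emph{no} hyperplane $H_0 \in \CA^k_\ell(r)$ can serve as the distinguished hyperplane in Definition~\ref{def:indfree}(ii), which requires (a) the exponent-containment comparison $\exp\CA'' \subseteq \exp\CA'$ ruling out all restriction types except $\CA^{k+1}_{\ell-1}(r)$, and then (b) showing that even for that surviving type the \emph{deletion} $\CA'$ fails to be inductively free. Step (b) is where Lemma~\ref{lem:q} and the induction hypothesis must be combined carefully — the subtlety is that $\CA'$ is no longer an intermediate arrangement $\CA^{k'}_{\ell'}(r)$ on the nose, so one either enlarges the induction to a suitable class of arrangements ("$\CA^k_\ell(r)$ with a set of $x_i-\zeta x_j$ hyperplanes from the upper block deleted") closed under the relevant operations, or one shows directly via Lemma~\ref{lem:q} that each such deletion breaks freeness. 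I expect the cleanest writeup uses Lemma~\ref{lem:q} with $\CA \rightsquigarrow \CA^k_\ell(r)$, $\CA'' \rightsquigarrow$ a restriction of type $\CA^{\ell-1}_{\ell-1}(r)$ or similar whose exponents are visibly not a subset of $\exp\CA$, immediately yielding non-freeness of the relevant deletion and short-circuiting most of the case analysis.
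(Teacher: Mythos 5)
Your overall strategy (rule out every possible distinguished hyperplane, using Lemma~\ref{lem:q} and Table~\ref{table2}) is the right one, but as written the proof has a concrete error and an unfilled gap. The error is in the exponent bookkeeping: for a coordinate hyperplane $H_0=\ker(x_i)$ the restriction is $\CA^{\ell-1}_{\ell-1}(r)$, whose exponents $\{1,r+1,\ldots,(\ell-2)r+1\}$ \emph{are} contained in $\exp\CA^k_\ell(r)$ (and in $\exp\CA^{k-1}_\ell(r)$, which is the deletion here), so this case cannot be eliminated by any exponent-containment test — indeed it is exactly the addition step used later to prove recursive freeness in Theorem~\ref{thm:akl}(ii). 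Conversely, for every non-coordinate hyperplane the restriction is $\CA^{k'}_{\ell-1}(r)$ with $k'\in\{k-1,k,k+1\}$, and for $r\ge 3$, $k\le\ell-3$ one checks (as a multiset comparison, using Proposition~\ref{prop:intermediate}(i)) that $\exp\CA''\not\subseteq\exp\CA$ in \emph{all three} cases, including $k'=k+1$. So your conclusion that the containment check kills the coordinate case and spares only $\CA^{k+1}_{\ell-1}(r)$ is backwards. Note also that your formula for $\exp\CA'$ (decrementing the last exponent) is asserted, not derived; but the decisive point is that Lemma~\ref{lem:q} compares $\exp\CA''$ with $\exp\CA$, both of which are known from Proposition~\ref{prop:intermediate}, so no information about $\CA'$ is needed at all.

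That observation dissolves what you call the main obstacle: you never need to analyze the non-intermediate deletions, enlarge the induction class, or run the vague ``descent'' at the end of your argument — as written, that step is a gap, not a proof. The paper instead inducts on $k$ (not $\ell$), with base case $k=0$ given by Theorem~\ref{thm:indfree1}(i): for $1\le k\le\ell-3$ and any $H\in\CA^k_\ell(r)$, either $H=\ker(x_i)$, in which case the deletion is $\CA^{k-1}_\ell(r)$ and is not inductively free by the induction hypothesis on $k$, or $H$ is non-coordinate, in which case $\CA$ and $\CA''$ are free with $\exp\CA''\not\subseteq\exp\CA$, so Lemma~\ref{lem:q} shows the deletion is not even free. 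Since Definition~\ref{def:indfree}(ii) requires some hyperplane whose deletion is inductively free, and every hyperplane fails, $\CA^k_\ell(r)$ is not inductively free. Your induction on $\ell$ does not reach the coordinate-hyperplane case (the deletion $\CA^{k-1}_\ell(r)$ lives in the same dimension $\ell$), which is another reason the argument does not close as proposed.
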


\begin{proof}
Thanks to Theorem \ref{thm:indfree1}(i), 
$\CA^0_\ell(r)$ is not inductively free. We now argue by induction
on $k$.
Suppose $1 \leq k \leq \ell - 3$ and $\CA = \CA^k_\ell(r)$.
Let $H \in \CA$.
If  $H = \ker(x_i)$ for some $1 \leq i \leq k$, then
$\CA' \cong \CA^{k - 1}_\ell(r)$ which 
 is not inductively free, by induction hypothesis.
So let $H \neq \ker(x_i)$ for $1 \leq i \leq k$ and let $(\CA, \CA', \CA'')$ be the triple of arrangements
corresponding to $H$.
Using Proposition \ref{prop:intermediate}, 
we see that in this case $\exp \CA'' \not\subseteq \exp \CA$,
since $k < \ell - 2$ and $r > 2$.
Thus, by Lemma \ref{lem:q}, $\CA'$ is not (inductively) free.

Thus for any choice of $H$ in $\CA$, 
the subarrangement $\CA'$ is not inductively free. 
Hence $\CA = \CA^k_\ell(r)$ is not inductively free.
\end{proof}

Our key result in this section classifies all 
inductively free arrangements among the $\CA^k_\ell(r)$.

\begin{theorem}
\label{thm:akl}  
Suppose $r \geq 2$, $\ell \geq 3$ and $0 \leq k \leq \ell$.
\begin{itemize} 
\item[(i)] 
$\CA^k_\ell(r)$ is inductively free if and only if $r = 2$ or $r \geq 3$ and $\ell - 2 \leq k \leq \ell$.
\item[(ii)] 
$\CA^k_\ell(r)$ is recursively free.
\end{itemize}
\end{theorem}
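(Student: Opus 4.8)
The plan is to treat the two parts of Theorem~\ref{thm:akl} separately, exploiting the three lemmas just proved. For part (i), the case $r = 2$ is immediate: when $r = 2$ the group $G(2,2,\ell)$ is the Coxeter group of type $D_\ell$ and $G(2,1,\ell)$ is of type $B_\ell$, both of which are inductively free by Theorem~\ref{thm:indfree1}(i), and by Table~\ref{table2} every restriction type of every $\CA^k_\ell(2)$ is again of the form $\CA^{k'}_{\ell'}(2)$; thus a straightforward induction on $\ell$ (base case $\ell = 2$ from Lemma~\ref{lem:rank1and2}) using induction of hyperplanes—adding the coordinate hyperplanes $\ker(x_i)$ one at a time to pass from $\CA^0_\ell(2)$ up to $\CA^\ell_\ell(2)$—shows each $\CA^k_\ell(2)$ is inductively free. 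Alternatively, and more cheaply, I would observe that $\CA^k_\ell(2)$ sits between the reflection arrangements of $D_\ell$ and $B_\ell$, both hereditarily inductively free by Theorem~\ref{thm:indfree1}(ii), and that by Proposition~\ref{prop:intermediate} together with the restriction table it is a restriction of $\CA(B_{\ell'})$ for suitable $\ell'$ (as in Example~\ref{ex:intermediate}), hence inductively free.

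For the forward direction of part (i) with $r \geq 3$: if $0 \leq k \leq \ell - 3$ then $\CA^k_\ell(r)$ is not inductively free by Lemma~\ref{lem:notindfree}. For the reverse direction with $r \geq 3$: the case $k = \ell$ gives $\CA^\ell_\ell(r) = \CA(G(r,1,\ell))$, which is inductively free by Theorem~\ref{thm:indfree1}(i) since $G(r,1,\ell)$ is not one of the excluded groups; the case $k = \ell - 1$ is Corollary~\ref{indfree2}; and the case $k = \ell - 2$ is Lemma~\ref{lem:indfree1}. Assembling these gives exactly the stated equivalence: $\CA^k_\ell(r)$ is inductively free if and only if $r = 2$, or $r \geq 3$ and $\ell - 2 \leq k \leq \ell$.

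For part (ii), recursive freeness, the point is that $\CRF \supseteq \CIF$, so the only cases requiring work are $r \geq 3$ and $0 \leq k \leq \ell - 3$. Here I would run an induction on $k$ downward from $k = \ell - 2$, or equivalently build $\CA^k_\ell(r)$ from the inductively free arrangement $\CA^{\ell-2}_\ell(r)$ by repeatedly applying clause (iii) of Definition~\ref{def:recfree} (deletion): removing the coordinate hyperplane $\ker(x_{\ell-2})$ from $\CA^{\ell-2}_\ell(r)$ yields $\CA^{\ell-3}_\ell(r)$, and by Table~\ref{table2} the restriction to that hyperplane is $\CA^{\ell-2}_{\ell-1}(r)$, which is inductively free by Lemma~\ref{lem:indfree1} hence recursively free; one checks via Proposition~\ref{prop:intermediate}(i) that $\exp(\CA^{\ell-2}_{\ell-1}(r)) = \{1, r+1, \ldots, (\ell-3)r+1, (\ell-2)r - 1\} \subseteq \exp(\CA^{\ell-2}_\ell(r)) = \{1, r+1, \ldots, (\ell-2)r+1, (\ell-2)r-1\}$, so clause (iii) applies and $\CA^{\ell-3}_\ell(r) \in \CRF$. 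Iterating this deletion of $\ker(x_j)$ for $j = \ell-2, \ell-3, \ldots, k+1$ and verifying at each stage that the relevant restriction (again a smaller-rank intermediate arrangement, recursively free by induction on $\ell$) has exponents contained in those of the current arrangement—which follows from the explicit exponent formula in Proposition~\ref{prop:intermediate}(i)—shows $\CA^k_\ell(r) \in \CRF$ for all $0 \leq k \leq \ell - 3$.

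The main obstacle I anticipate is the exponent bookkeeping in part (ii): one must verify for each deletion step that $\exp \CA'' \subseteq \exp \CA$ in the sense of clause~(iii), and this requires carefully tracking how the exponent multiset $\{1, r+1, \ldots, (\ell-2)r+1, (\ell-1)r - \ell + k + 1\}$ behaves as $k$ decreases and as $\ell$ decreases under restriction; the formula from Proposition~\ref{prop:intermediate}(i) makes this mechanical, but one has to be attentive to the two ways the last exponent can interact with the arithmetic progression, and to organize the double induction (on $\ell$ for the restrictions, on $k$ for the deletions within fixed $\ell$) so that the inductive hypotheses are available exactly when needed. The $r = 2$ case of part~(i) needs only a remark, and the $r \geq 3$ case of part~(i) is essentially a citation of the three preceding results.
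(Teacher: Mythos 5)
Your overall plan coincides with the paper's: part (i) is assembled from Theorem~\ref{thm:indfree1}(i), Lemma~\ref{lem:indfree1}, Corollary~\ref{indfree2} and Lemma~\ref{lem:notindfree} exactly as in the paper, and part (ii) is the paper's reverse induction on $k$, deleting coordinate hyperplanes via clause (iii) of Definition~\ref{def:recfree}. (For $r=2$ the paper simply cites Jambu--Terao to see that $\CD^k_\ell=\CA^k_\ell(2)$ is inductively free; your direct induction adding the coordinate hyperplanes to $\CA^0_\ell(2)=\CA(G(2,2,\ell))$ also works, but in your ``cheaper'' alternative note that the intermediate arrangements arise as restrictions of $\CA(G(2,2,\ell'))$, not of $\CA(B_{\ell'})$: every restriction of $\CA(G(2,1,\ell'))$ is again of type $B$, by Table~\ref{table2}.)

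In part (ii), however, the one step you actually verify is wrong in two ways that happen to cancel. By Table~\ref{table2} (row ``$x_i$''), the restriction of $\CA^{\ell-2}_\ell(r)$ to $\ker(x_{\ell-2})$ is $\CA^{\ell-1}_{\ell-1}(r)$, not $\CA^{\ell-2}_{\ell-1}(r)$; and your exponents are miscomputed: by Proposition~\ref{prop:intermediate}(i), $\exp\CA^{\ell-2}_{\ell-1}(r)=\{1,r+1,\ldots,(\ell-3)r+1,(\ell-2)r\}$ and $\exp\CA^{\ell-2}_\ell(r)=\{1,r+1,\ldots,(\ell-2)r+1,(\ell-1)r-1\}$. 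With your identification of $\CA''$ the containment required by clause (iii) would therefore \emph{fail}, since $(\ell-2)r$ is not an exponent of $\CA^{\ell-2}_\ell(r)$ for $r\ge 2$; your check only ``works'' because both exponent multisets were written down incorrectly. The argument is rescued by the correct restriction type: for any $1\le k\le \ell-1$ and $j\le k$, restricting $\CA^k_\ell(r)$ to $\ker(x_j)$ gives $\CA^{\ell-1}_{\ell-1}(r)=\CA(G(r,1,\ell-1))$, whose exponents $\{1,r+1,\ldots,(\ell-2)r+1\}$ form an initial segment contained in $\exp\CA^k_\ell(r)$ for \emph{every} $k$, and which is inductively (hence recursively) free by Theorem~\ref{thm:indfree1}(i). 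So each deletion of $\ker(x_k)$ from $\CA^k_\ell(r)$ satisfies clause (iii) uniformly --- this is precisely the paper's proof --- and no auxiliary induction on $\ell$ for the restrictions is needed, since the restriction is never a proper intermediate arrangement. With that correction your write-up matches the paper's argument.
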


\begin{proof}
(i). 
It follows from \cite[Ex.~2.6]{jambuterao:supersolvable} that 
$\CD^k_\ell = \CA^k_\ell(2)$ is inductively free for
each $0 \leq k \leq \ell$. Now let $r \geq 3$.
By Theorem \ref{thm:indfree1}(i), Lemma \ref{lem:indfree1} and Corollary \ref{indfree2}, 
$\CA^{\ell}_\ell(r)$,  $\CA^{\ell - 2}_\ell(r)$ and $\CA^{\ell - 1}_\ell(r)$ are inductively
free.
By Lemma \ref{lem:notindfree}, $\CA^k_\ell(r)$ is not inductively free for $0 \leq k \leq \ell - 3$.

(ii).
Any $2$-arrangement is inductively free (and thus recursively free),
by Lemma \ref{lem:rank1and2}.
By part (i), the arrangements $\CA^k_\ell(r)$ 
are inductively free (and thus recursively free) for
$\ell - 2 \leq k \leq \ell$ with $\ell \geq 3$ and arbitrary $r$.

Now we use reverse induction on $k$ starting at $k = \ell - 2$.
Suppose $1 \leq k \leq \ell - 2$ and 
let $\CA := \CA^k_\ell(r)$ 
(which is recursively free by induction hypothesis) and
$H := \ker(x_k) \in \CA$.
Using Proposition \ref{prop:intermediate}, 
we see that $\CA'' = \CA^H \cong \CA^{\ell - 1}_{\ell - 1}(r)$ and
$\exp \CA'' \subseteq \exp\CA$.
Hence $\CA' = \CA\backslash\{H\} = \CA^{k - 1}_\ell(r)$ is recursively free.
This proves (ii).
\end{proof}

\section{Proofs of Theorems \ref{thm:indfree}
and \ref{thm:heredfree}}
\label{sec:proofs}

By Steinberg's Theorem \cite{steinberg}
(cf.\ \cite[Thm.\ 6.25]{orlikterao:arrangements}), 
the pointwise stabilizer $W_X$ of $X$ in $L(\CA)$ 
is again a complex reflection group.
So following 
\cite{orliksolomon:unitaryreflectiongroups} and 
\cite[\S 6.4, App.~C]{orlikterao:arrangements}, 
we label the $W$-orbit of $X \in L(\CA)$ by the type $T$ say,
of $W_X$. Therefore, we denote such a restriction $\CA(W)^X$ 
by the pair $(W, T)$ whenever convenient.

\subsection{Proof of Theorem \ref{thm:indfree}}
\label{subsec:pfthmindfree}
Part (i) of Theorem \ref{thm:indfree} is 
simply Theorem \ref{thm:indfree1}(ii).
Part (ii)  follows from 
Theorem \ref{thm:akl}(i) and 
\cite[Prop.\ 2.14]{orliksolomon:unitaryreflectiongroups}
(cf.~\cite[Prop.~6.84]{orlikterao:arrangements}).

Finally, we consider part (iii), i.e., the 
irreducible reflection groups of exceptional type
with non-inductively free reflection arrangement.

Thanks to  \cite{hogeroehrle:free}, 
the restrictions $(G_{34}, A_1)$, $(G_{34}, A_1^2)$ and $(G_{34}, A_2)$ 
are free with exponents given in \cite[Table C.17]{orlikterao:arrangements}.
Let $\CA^X$ be $(G_{34}, A_1)$ and let $H$ be any hyperplane in $\CA^X$.
By \cite[Table C.17]{orlikterao:arrangements},
$(\CA^X)''$ is either $(G_{34}, A_1^2)$ or $(G_{34}, A_2)$ 
and hence $\exp (\CA^X)'' \not\subseteq \exp \CA^X$. Thus by
Lemma \ref{lem:q}, $(\CA^X)'$ 
is not free and hence not inductively free. 
So $(G_{34}, A_1)$ is not inductively free.

As $(G_{34}, G(3, 3, 3)) \cong \CA(G_{26})$ 
(cf.\ \cite[App.~D]{orlikterao:arrangements}), 
it is inductively free, by Theorem \ref{thm:indfree1}(i).

Thanks to Theorem \ref{thm:indfree1}(i) and 
the classification of the 
restrictions $\CA^X$ from \cite[App.~C]{orlikterao:arrangements},
there are 10 cases that remain to be considered: 
$(G_{29}, A_1)$, $(G_{31}, A_1)$, $(G_{33}, A_1)$, $(G_{33}, A_1^2)$, $(G_{33}, A_2)$,
$(G_{34}, A_1^2)$, $(G_{34}, A_2)$, $(G_{34}, A_1^3)$, $(G_{34}, A_1A_2)$, 
and $(G_{34}, A_3)$.
We treated them computationally, see Remark \ref{rem:computations}.
It turns out that the 3-dimensional restrictions are still
inductively free, while the 4-dimensional ones are not.
In Tables \ref{table3} -- \ref{table8} we give the induction tables 
for the former instances; where we use $a, b$ and $c$ as
variable names for simplicity and where $i$ is a primitive 4-th root of 1 and 
$\zeta = e^{2\pi i/3}$.

Theorem \ref{thm:indfree}  now follows from Theorem \ref{thm:indfree1}(i)
and Lemmas \ref{lem:indfree-exeptional} and \ref{lem:notindfree-exeptional}
below.

\begin{lemma}
\label{lem:indfree-exeptional}
Each of the 3-dimensional restrictions 
$(G_{29}, A_1)$, $(G_{31}, A_1)$, $(G_{33}, A_1^2)$, $(G_{33}, A_2)$,
$(G_{34}, A_1^3)$, $(G_{34}, A_1A_2)$, and $(G_{34}, A_3)$ 
is inductively free.
\end{lemma}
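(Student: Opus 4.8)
The plan is to verify that each of the seven listed three-dimensional restrictions is inductively free by exhibiting an explicit induction of hyperplanes, i.e.\ by constructing an induction table in the sense of Remark~\ref{rem:indtable}. For each restriction $\CA^X$, we first produce (with the aid of \cite[App.~C]{orlikterao:arrangements}) explicit defining linear forms for the hyperplanes in the three-dimensional arrangement, using coordinates $a,b,c$; here $i$ denotes a primitive $4$th root of unity and $\zeta=e^{2\pi i/3}$, as these are the fields of definition of $G_{29},G_{31}$ on the one hand and $G_{33},G_{34}$ on the other. We then choose a starting subarrangement $\CA_0$ that is visibly inductively free (for instance a product $\CB\times\Phi_1$ with $\CB$ a $2$-arrangement, which is inductively free by Lemmas~\ref{lem:rank1and2} and Proposition~\ref{prop:product-indfree}), together with a total order on $\CA^X\setminus\CA_0=\{H_1,\dots,H_n\}$.

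Next I would, for each $i=1,\dots,n$, check that the triple $(\CA_i,\CA_{i-1},\CA_i^{H_i})$ satisfies condition~(ii) of Definition~\ref{def:indfree}: the restriction $\CA_i^{H_i}$ is a $2$-arrangement, hence inductively free by Lemma~\ref{lem:rank1and2}, so it suffices to confirm that $\exp\CA_i^{H_i}\subseteq\exp\CA_{i-1}$ and then read off $\exp\CA_i$ from the Addition--Deletion Theorem~\ref{thm:add-del}. Recording $\exp\CA_i'=\exp\CA_{i-1}$, the form $\alpha_{H_i}$, and $\exp\CA_i''=\exp\CA_i^{H_i}$ in successive rows yields precisely Tables~\ref{table3}--\ref{table8}, and the bottom row of each table displays $\exp\CA^X$, certifying that $\CA^X\in\CIF$. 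The exponents of the $2$-dimensional restrictions $\CA_i^{H_i}=\CA_i''$ are $\{1,m_i\}$ where $m_i$ is simply the number of hyperplanes of $\CA_i$ through the point $H_i\cap(\text{the relevant line})$ minus one, so these are immediate to compute once the forms are written down.

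The genuine work, and the main obstacle, is purely combinatorial bookkeeping rather than conceptual: one must order the hyperplanes so that at every stage the containment $\exp\CA_i''\subseteq\exp\CA_{i-1}$ actually holds, which requires a judicious choice of $\CA_0$ and of the order, and for the larger of these arrangements (those with many hyperplanes, such as $(G_{34},A_3)$) verifying all the multiplicities $m_i$ by hand is error-prone. For this reason the computations were carried out with the help of a computer algebra system; see Remark~\ref{rem:computations}. Once a valid order is found, each individual step is a routine application of Theorem~\ref{thm:add-del}, and no step can fail to propagate freeness because every restriction encountered is a $2$-arrangement and hence automatically inductively free. Assembling the seven tables completes the proof.
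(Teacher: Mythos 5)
Your proposal matches the paper's proof: the paper establishes the lemma precisely by exhibiting the explicit induction tables (Tables~\ref{table3}--\ref{table8}), computed with computer-algebra assistance as described in Remark~\ref{rem:computations}, with each restriction a $2$-arrangement (hence inductively free by Lemma~\ref{lem:rank1and2}) and the exponent containment checked at every step via Theorem~\ref{thm:add-del}. Only a cosmetic quibble: the exponents of each rank-$2$ restriction $\CA_i''$ are $\{1,|\CA_i''|-1\}$, i.e.\ determined by the number of distinct lines in the restriction, which is what your count should say.
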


\begin{proof}
We present the corresponding 
induction tables in Tables \ref{table3} -- \ref{table8} below.
\end{proof}

\begin{table}[h] 
\begin{tabular}[t]{lll}\hline
  $\exp\CA'$ & $\alpha_{H}$ & $\exp\CA''$\\ \hline\hline
  0, 0, 0 & $a + i  b - i  c$ & 0, 0\\
  0, 0, 1 & $a + i  b + i  c$ & 0, 1\\
  0, 1, 1 & $a - i  b + c$ & 1, 1\\
  1, 1, 1 & $a + b$ & 1, 1\\
  1, 1, 2 & $a + i  b + c$ & 1, 2\\
  1, 2, 2 & $c$ & 1, 2\\
  1, 2, 3 & $a - i  b - c$ & 1, 3\\
  1, 3, 3 & $a - i  b + i  c$ & 1, 3\\
  1, 3, 4 & $a - i  b - i  c$ & 1, 4\\
  1, 4, 4 & $a + i  b - c$ & 1, 4\\
  1, 4, 5 & $a$ & 1, 5\\
\hline 
\end{tabular}
  \qquad
\begin{tabular}[t]{lll}\hline
  $\exp \CA'$ & $\alpha_H$ & $\exp \CA''$\\ 
    \hline\hline 
  1, 5, 5 & $b$ & 1, 5\\
  1, 5, 6 & $a + b + i  c$ & 1, 6\\
  1, 6, 6 & $a + b - i  c$ & 1, 6\\
  1, 6, 7 & $a - b - i  c$ & 1, 7\\
  1, 7, 7 & $a - b + i  c$ & 1, 7\\
  1, 7, 8 & $a - c$ & 1, 8\\
  1, 8, 8 & $a - b$ & 1, 8\\
  1, 8, 9 & $b - c$ & 1, 9\\
  1, 9, 9 & $b + c$ & 1, 9\\
  1, 9, 10 & $a + c$ & 1, 9\\
  1, 9, 11 && \\ \hline
\end{tabular}
\medskip
\caption{Induction Table for $(G_{29}, A_1)$}
\label{table3}
\end{table}

\begin{table}[h] 
\begin{tabular}[t]{lll}\hline
  $\exp\CA'$ & $\alpha_{H}$ & $\exp\CA''$\\ \hline\hline
  0, 0, 0 & $b - i  c$ & 0, 0\\
  0, 0, 1 & $b - c$ & 0, 1\\
  0, 1, 1 & $a - i  c$ & 1, 1\\
  1, 1, 1 & $c$ & 1, 1\\
  1, 1, 2 & $b$ & 1, 1\\
  1, 1, 3 & $a$ & 1, 3\\
  1, 2, 3 & $a + i  b - i  c$ & 1, 3\\
  1, 3, 3 & $a + b - i  c$ & 1, 3\\
  1, 3, 4 & $a - b - i  c$ & 1, 4\\
  1, 4, 4 & $a - i  b - i  c$ & 1, 4\\
  1, 4, 5 & $a - i  b - c$ & 1, 5\\
  1, 5, 5 & $a - b + c$ & 1, 5\\
  1, 5, 6 & $a + i  b + c$ & 1, 6\\
  1, 6, 6 & $a - i  b + c$ & 1, 6\\
  1, 6, 7 & $a - b - c$ & 1, 7\\
  1, 7, 7 & $a + b - c$ & 1, 7\\
\hline 
\end{tabular}
  \qquad
\begin{tabular}[t]{lll}\hline
  $\exp \CA'$ & $\alpha_H$ & $\exp \CA''$\\ 
    \hline\hline 
  1, 7, 8 & $a + i  b - c$ & 1, 8\\
  1, 8, 8 & $a + b + c$ & 1, 8\\
  1, 8, 9 & $b + i  c$ & 1, 9\\
  1, 9, 9 & $b + c$ & 1, 9\\
  1, 9, 10 & $a - i  b + i  c$ & 1, 10\\
  1, 10, 10 & $a - b + i  c$ & 1, 10\\
  1, 10, 11 & $a + b + i  c$ & 1, 10\\
  1, 10, 12 & $a + i  b + i  c$ & 1, 10\\
  1, 10, 13 & $a - c$ & 1, 13\\
  1, 11, 13 & $a + i  c$ & 1, 13\\
  1, 12, 13 & $a + c$ & 1, 13\\
  1, 13, 13 & $a - i  b$ & 1, 13\\
  1, 13, 14 & $a - b$ & 1, 13\\
  1, 13, 15 & $a + i  b$ & 1, 13\\
  1, 13, 16& $a + b$ & 1, 13\\
  1, 13, 17 && \\ \hline
\end{tabular}
\medskip
\caption{Induction Table for $(G_{31}, A_1)$}
\label{table3a}
\end{table}

\begin{table}[h] 
\renewcommand{\arraystretch}{1.2}
\begin{tabular}[t]{lll}\hline
  $\exp\CA'$ & $\alpha_{H}$ & $\exp\CA''$\\ \hline\hline
  0, 0, 0 & $a$ & 0, 0\\
  0, 0, 1 & $a + 2b + c$ & 0, 1\\
  0, 1, 1 & $a - \zeta b - \frac{1}{2}\zeta^2c$ & 1, 1\\
  1, 1, 1 & $a + 2\zeta^2b + \zeta c$ & 1, 1\\
  1, 1, 2 & $a + 2\zeta b + \zeta^2c$ & 1, 2\\
  1, 2, 2 & $a - \zeta^2c$ & 1, 2\\
  1, 2, 3 & $a - \zeta c$ & 1, 3\\
  1, 3, 3 & $a - \zeta^2b - \frac{1}{2}\zeta c$ & 1, 3\\
  1, 3, 4 & $a + 2b - 2c$ & 1, 4\\
\hline 
\end{tabular}
  \qquad
\begin{tabular}[t]{lll}\hline
  $\exp \CA'$ & $\alpha_H$ & $\exp \CA''$\\ 
    \hline\hline 
  1, 4, 4 & $a - c$ & 1, 4\\
  1, 4, 5 & $a + 2\zeta b - 2\zeta^2c$ & 1, 5\\
  1, 5, 5 & $a - b - \frac{1}{2}c$ & 1, 5\\
  1, 5, 6 & $a + 2\zeta^2b - 2\zeta c$ & 1, 6\\
  1, 6, 6 & $c$ & 1, 6\\
  1, 6, 7 & $a - \zeta^2b + \zeta c$ & 1, 7\\
  1, 7, 7 & $a - b + c$ & 1, 7\\
  1, 7, 8 & $a - \zeta b + \zeta^2c$ & 1, 7\\
  1, 7, 9 && \\ \hline
\end{tabular}
\medskip
\caption{Induction Table for $(G_{33}, A_1^2)$}
\label{table4}
\end{table}

\begin{table}[h] 
\renewcommand{\arraystretch}{1.2}
\begin{tabular}[t]{lll}\hline
  $\exp\CA'$ & $\alpha_{H}$ & $\exp\CA''$\\ \hline\hline
  0, 0, 0 & $b - c$ & 0, 0\\
  0, 0, 1 & $b - \zeta c$ & 0, 1\\
  0, 1, 1 & $a - \frac{1}{2}b - \frac{3}{2}c$ & 1, 1\\
  1, 1, 1 & $b - \zeta^2c$ & 1, 1\\
  1, 1, 2 & $c$ & 1, 1\\
  1, 1, 3 & $a - \frac{1}{2} \zeta^2 b + \frac{1}{2} (\zeta^2 - 1)c$ & 1, 3  \\
  1, 2, 3 & $a - \frac{1}{2}\zeta b + \frac{1}{2}(\zeta - 1)c$ & 1, 3\\
\hline 
\end{tabular}
  \qquad
\begin{tabular}[t]{lll}\hline
  $\exp \CA'$ & $\alpha_H$ & $\exp \CA''$\\ 
    \hline\hline 
  1, 3, 3 & $a - \frac{1}{2}b$ & 1, 3\\
  1, 3, 4 & $a - \frac{1}{2}\zeta b - (\zeta + \frac{1}{2})c$ & 1, 4\\
  1, 4, 4 & $a - \frac{1}{2}\zeta^2b + (\zeta + \frac{1}{2})c$ & 1, 4\\
  1, 4, 5 & $a - \frac{1}{2}\zeta^2b - \frac{1}{2}(\zeta - 1)c$ & 1, 5\\
  1, 5, 5 & $a - \frac{1}{2}\zeta b - \frac{1}{2}(\zeta^2 - 1)c$ & 1, 5\\
  1, 5, 6 & $a - \frac{1}{2}b - \frac{3}{2}\zeta c$ & 1, 6\\
  1, 6, 6 & $a - \frac{1}{2}b - \frac{3}{2}\zeta^2c$ & 1, 6\\
  1, 6, 7 & & \\ \hline
\end{tabular}
\medskip
\caption{Induction Table for $(G_{33}, A_2)$}
\label{table5}
\end{table}

\begin{table}[h] 
\renewcommand{\arraystretch}{1.2}
\begin{tabular}[t]{lll}\hline
  $\exp\CA'$ & $\alpha_{H}$ & $\exp\CA''$\\ \hline\hline
  0, 0, 0 & $a - \zeta b$ & 0, 0\\
  0, 0, 1 & $b - \zeta^2c$ & 0, 1\\
  0, 1, 1 & $a - c$ & 0, 1\\
  0, 1, 2 & $a + (\zeta^2 - 1)b + 2\zeta^2c$ & 0, 1\\
  0, 1, 3 & $b - c$ & 1, 3\\
  1, 1, 3 & $a + (\zeta^2 - 1)b + 2c$ & 1, 3\\
  1, 2, 3 & $a + \frac{1}{3}(\zeta^2 - 1)b + \frac{2}{3}(\zeta^2 - 1)c$ & 1, 2\\
  1, 2, 4 & $a + \frac{1}{3}(\zeta^2 - 1)b - \frac{2}{3}(\zeta - 1)c$ & 1, 4\\
  1, 3, 4 & $a - \frac{1}{3}(\zeta^2 - 1)b + \frac{2}{3}(\zeta - 1)c$ & 1, 3\\
  1, 3, 5 & $a - \zeta c$ & 1, 5\\
  1, 4, 5 & $a - (\zeta^2 - 1)b + 2\zeta c$ & 1, 4\\
  1, 4, 6 & $a + \zeta b + \zeta^2c$ & 1, 6\\
  1, 5, 6 & $a - (\zeta^2 - 1)b + 2\zeta^2c$ & 1, 6\\
  1, 6, 6 & $a - \frac{1}{3}(\zeta^2 - 1)b + \frac{2}{3}(\zeta^2 - \zeta)c$ & 1, 6\\
  1, 6, 7 & $a + \zeta b$ & 1, 7\\
  1, 7, 7 & $a + \zeta b - 2\zeta^2c$ & 1, 7\\
  1, 7, 8 & $a + \zeta b + 2\zeta^2c$ & 1, 7\\
\hline 
\end{tabular}
  \qquad
\begin{tabular}[t]{lll}\hline
  $\exp \CA'$ & $\alpha_H$ & $\exp \CA''$\\ 
    \hline\hline 
  1, 7, 9 & $a + \zeta b - 2\zeta c$ & 1, 7\\
  1, 7, 10 & $a + \zeta b - 2c$ & 1, 7\\
  1, 7, 11 & $c$ & 1, 7\\
  1, 7, 12 & $a + \zeta b + 4\zeta^2c$ & 1, 7\\
  1, 7, 13 & $a - b$ & 1, 13\\
  1, 8, 13 & $a + 2\zeta^2c$ & 1, 13\\
  1, 9, 13 & $b - \zeta c$ & 1, 13\\
  1, 10, 13 & $a - \zeta b + 2(\zeta - 1)c$ & 1, 13\\
  1, 11, 13 & $b + 2\zeta c$ & 1, 13\\
  1, 12, 13 & $a - \zeta^2c$ & 1, 13\\
  1, 13, 13 & $a + \frac{1}{3}\zeta b + \frac{2}{3}\zeta^2c$ & 1, 13\\
  1, 13, 14 & $a + 3\zeta b + 2\zeta^2c$ & 1, 13\\
  1, 13, 15 & $a - \zeta b - 2(\zeta - 1)c$ & 1, 13\\
  1, 13, 16 & $a - \zeta^2b + (\zeta^2 - 1)c$ & 1, 13\\
  1, 13, 17 & $a - \zeta^2b$ & 1, 13\\
  1, 13, 18 & $a - b + (\zeta^2 - \zeta)c$ & 1, 13\\
  1, 13, 19 && \\ \hline
\end{tabular}
\medskip
\caption{Induction Table for $(G_{34}, A_1^3)$}
\label{table6}
\end{table}

\begin{table}[h] 
\renewcommand{\arraystretch}{1.2}
\begin{tabular}[t]{lll}\hline
  $\exp\CA'$ & $\alpha_{H}$ & $\exp\CA''$\\ \hline\hline
  0, 0, 0 & $c$ & 0, 0\\
  0, 0, 1 & $a + \zeta b + \zeta c$ & 0, 1\\
  0, 1, 1 & $a + \zeta b + (2\zeta - 1)c$ & 0, 1\\
  0, 1, 2 & $a - c$ & 1, 2\\
  1, 1, 2 & $a + 2\zeta c$ & 1, 2\\
  1, 2, 2 & $a - \zeta^2c$ & 1, 2\\
  1, 2, 3 & $a + \zeta b - (\zeta^2 - 2\zeta)c$ & 1, 3\\
  1, 3, 3 & $a - \zeta^2b - (\zeta^2 - \zeta)c$ & 1, 3\\
  1, 3, 4 & $b + 2c$ & 1, 4\\
  1, 4, 4 & $b - \zeta^2c$ & 1, 4\\
  1, 4, 5 & $a + \zeta b - 2c$ & 1, 5\\
  1, 5, 5 & $b - \zeta c$ & 1, 5\\
  1, 5, 6 & $b - (2\zeta^2 - 1)c$ & 1, 6\\
  1, 6, 6 & $a - b +(\zeta-1)c$ & 1, 6\\
  1, 6, 7 & $a - \zeta^2b + 3\zeta c$ & 1, 7\\
\hline 
\end{tabular}
  \qquad
\begin{tabular}[t]{lll}\hline
  $\exp \CA'$ & $\alpha_H$ & $\exp \CA''$\\ 
    \hline\hline  
 1, 7, 7 & $b - c$ & 1, 7\\
  1, 7, 8 & $a + (\zeta - 2)c$ & 1, 8\\
  1, 8, 8 & $a - \zeta^2b$ & 1, 8\\
  1, 8, 9 & $b - (2\zeta - 1)c$ & 1, 9\\
  1, 9, 9 & $a + \zeta b + 4 \zeta c$ & 1, 9\\
  1, 9, 10 & $a + \zeta b - 2\zeta^2c$ & 1, 10\\
  1, 10, 10 & $a + (3\zeta + 2)c$ & 1, 10\\
  1, 10, 11 & $a - \zeta c$ & 1, 10\\
  1, 10, 12 & $a + \frac{1}{2}\zeta b + \frac{3}{2}\zeta c$ & 1, 12\\
  1, 11, 12 & $a - b + 3\zeta c$ & 1, 12\\
  1, 12, 12 & $a + 2\zeta b + 3\zeta c$ & 1, 12\\
  1, 12, 13 & $a - \zeta b$ & 1, 13\\
  1, 13, 13 & $a - b$ & 1, 13\\
  1, 13, 14 & $a - b - 3c$ & 1, 13\\
  1, 13, 15 & $a - \zeta^2b - 3\zeta^2c$ & 1, 13\\
  1, 13, 16 && \\ \hline
\end{tabular}
\medskip
\caption{Induction Table for $(G_{34}, A_1A_2)$}
\label{table7}
\end{table}

\begin{table}[h] 
\renewcommand{\arraystretch}{1.2}
\begin{tabular}[t]{lll}\hline
  $\exp\CA'$ & $\alpha_{H}$ & $\exp\CA''$\\ \hline\hline
  0, 0, 0 & $a + \zeta b + \zeta^2 c$ & 0, 0\\
  0, 0, 1 & $a + b + c$ & 0, 1\\
  0, 1, 1 & $a - \zeta c$ & 0, 1\\
  0, 1, 2 & $b - \zeta^2 c$ & 0, 1\\
  0, 1, 3 & $a + \zeta^2 b - 2\zeta c$ & 0, 1\\
  0, 1, 4 & $a + \zeta^2 b + (2\zeta^2 - 1) c$ & 1, 4\\
  1, 1, 4 & $a - c$ & 1, 4\\
  1, 2, 4 & $a + \zeta^2 b + \zeta c$ & 1, 4\\
  1, 3, 4 & $a + b - 2 c$ & 1, 4\\
  1, 4, 4 & $c$ & 1, 4\\
  1, 4, 5 & $a + \zeta^2 b - (\zeta^2 - 2) c$ & 1, 5\\
  1, 5, 5 & $b - c$ & 1, 5\\
  1, 5, 6 & $a + b + (2\zeta^2 - \zeta) c$ & 1, 6\\
\hline 
\end{tabular}
  \qquad
\begin{tabular}[t]{lll}\hline
  $\exp \CA'$ & $\alpha_H$ & $\exp \CA''$\\ 
    \hline\hline  
 1, 6, 6 & $a + b - (\zeta^2 - 2\zeta) c$ & 1, 6\\
  1, 6, 7 & $a - \zeta b$ & 1, 7\\
  1, 7, 7 & $a + \zeta b - 2\zeta^2 c$ & 1, 7\\
  1, 7, 8 & $b - \zeta c$ & 1, 8\\
  1, 8, 8 & $a - \zeta^2 c$ & 1, 8\\
  1, 8, 9 & $a + \zeta b + (2\zeta - 1) c$ & 1, 9\\
  1, 9, 9 & $a + \zeta b - (\zeta - 2) c$ & 1, 9\\
  1, 9, 10 & $a + \zeta b + 4\zeta^2 c$ & 1, 9\\
  1, 9, 11 & $a + b + 4 c$ & 1, 11\\
  1, 10, 11 & $a - \zeta^2 b$ & 1, 11\\
  1, 11, 11 & $a + \zeta^2 b + 4\zeta c$ & 1, 11\\
  1, 11, 12 & $a - b$ & 1, 11\\
  1, 11, 13 && \\ \hline
\end{tabular}
\medskip
\caption{Induction Table for $(G_{34}, A_3)$}
\label{table8}
\end{table}

\begin{lemma}
\label{lem:notindfree-exeptional}
The 4-dimensional restrictions 
$(G_{33}, A_1)$,
$(G_{34}, A_1^2)$, and $(G_{34}, A_2)$
are not inductively free.
\end{lemma}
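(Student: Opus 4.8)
The plan is to mimic the argument used just above for $(G_{34}, A_1)$. Write $\CB$ for one of the three $4$-dimensional restrictions in the statement. Each such $\CB$ is free --- for $(G_{34}, A_1^2)$ and $(G_{34}, A_2)$ by \cite{hogeroehrle:free}, and in all three cases with exponents $\exp\CB$ as recorded in \cite[App.~C]{orlikterao:arrangements}. Since an inductively free arrangement is in particular free, $\CB$ can only lie in $\CIF$ if there is a hyperplane $H_0 \in \CB$ for which the deletion $\CB \setminus \{H_0\}$ is (inductively) free; so it suffices to show that $\CB \setminus \{H\}$ is \emph{not} free for every $H \in \CB$.

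First I would, for each of the three arrangements $\CB$, run through the $W$-orbits of hyperplanes $H \in \CB$ and read off from the restriction tables in \cite[App.~C]{orlikterao:arrangements} the finitely many types of the second restrictions $\CB^H$. Each $\CB^H$ is a $3$-dimensional restriction of the ambient reflection arrangement and is therefore free, with exponents again listed in \cite[App.~C]{orlikterao:arrangements} --- in fact all the types that occur are inductively free by Lemma \ref{lem:indfree-exeptional} together with Theorem \ref{thm:indfree1}(i). Next I would compare, orbit by orbit, $\exp \CB^H$ with $\exp \CB$ and check that the containment $\exp \CB^H \subseteq \exp \CB$ fails in every case. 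Once this is established, Lemma \ref{lem:q} applied to the triple $(\CB, \CB \setminus \{H\}, \CB^H)$ yields that $\CB \setminus \{H\}$ is not free, and hence not inductively free, for every $H \in \CB$; therefore $\CB \notin \CIF$.

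The one substantive step --- and the place I would expect trouble to hide --- is the exhaustive comparison of exponents: one must verify $\exp \CB^H \not\subseteq \exp \CB$ for \emph{all} hyperplanes of \emph{all three} arrangements. This is a finite but fiddly bookkeeping exercise against the data in \cite[App.~C]{orlikterao:arrangements}, and it is most safely carried out (and double-checked) by computer, cf.\ Remark \ref{rem:computations}. Should the non-containment fail for some hyperplane, Lemma \ref{lem:q} would no longer apply to that deletion and one would instead have to exclude the existence of any admissible chain of additions realizing inductive freeness of $\CB$ directly; but for the three restrictions listed the elementary criterion already suffices.
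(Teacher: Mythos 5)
There is a genuine gap, and it sits exactly where you flagged it: the ``fiddly bookkeeping exercise'' does not come out the way you need it to. Your argument only shows non-membership in $\CIF$ if $\exp \CB^H \not\subseteq \exp \CB$ for \emph{every} hyperplane $H \in \CB$, so that Lemma \ref{lem:q} kills every first deletion. But for all three arrangements there are hyperplanes for which the containment \emph{does} hold. For instance, $(G_{33},A_1)$ has exponents $\{1,7,9,11\}$, and it has hyperplanes $H$ with $\CB^H$ of type $(G_{33},A_1^2)$, whose exponents are $\{1,7,9\}$ (see Table \ref{table4}), which is contained in $\{1,7,9,11\}$; likewise $(G_{34},A_1^2)$ with exponents $\{1,13,19,23\}$ admits restrictions of type $(G_{34},A_1^3)$ with exponents $\{1,13,19\}$, and $(G_{34},A_2)$ with exponents $\{1,13,16,19\}$ admits restrictions of type $(G_{34},A_1A_2)$ with exponents $\{1,13,16\}$. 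This is reflected in the first rows of Tables \ref{tablenif1}--\ref{tablenif3}, where the counts are $12$, $12$ and $18$ rather than $0$. So Lemma \ref{lem:q} is silent about those deletions, your one-step criterion does not suffice, and your closing claim that ``for the three restrictions listed the elementary criterion already suffices'' is false. (The one-step argument is precisely the one used for the $5$-dimensional restriction $(G_{34},A_1)$, where it does work; it does not carry over to these three.)

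To close the gap you must do what you yourself describe as the fallback: rule out \emph{every} admissible chain of deletions, not just the first step. An inductively free arrangement admits an ordering of all its hyperplanes so that every deletion along the way is free; freeness of each intermediate arrangement forces the cardinality condition \eqref{eq:cond} on the next restriction, by Theorem \ref{thm:add-del} together with the fact that the exponents of a free arrangement sum to its cardinality. One therefore has to search the tree of all deletion sequences satisfying this necessary condition and verify that every branch dies before the arrangement is exhausted. This is what the computational breadth-first search behind Tables \ref{tablenif1}--\ref{tablenif3} does: the number of admissible subarrangements $\CB$ drops to $0$ at cardinality $11$, $13$ and $19$ respectively, while the arrangements have $28$, $56$ and $49$ hyperplanes, so no complete chain of free deletions exists and hence none of the three restrictions is inductively free. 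Your proposal as written stops short of this and therefore does not prove the lemma.
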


\begin{proof} 
The argument is similar to the proof in \cite[Lem.\ 3.5]{hogeroehrle:inductivelyfree},
where we showed that the reflection arrangement of the exceptional group 
of type $G_{31}$ is not inductively free.

Let $\CA$ be 
$(G_{33}, A_1)$,
$(G_{34}, A_1^2)$, or $(G_{34}, A_2)$,
respectively.
Then thanks to  \cite{hogeroehrle:free}, $\CA$ is known to be free
with exponents $\{1, 7, 9, 11\}$, $\{1, 13, 19, 23\}$,
or $\{1, 13, 16, 19\}$, respectively 
(cf.~\cite[Table C.14, Table C.17]{orlikterao:arrangements}).
Nevertheless,  one cannot successively remove all  hyperplanes 
from $\CA$ such that in each step the
resulting arrangement is free. 
In particular, $\CA$ can not be inductively free.
The Addition-Deletion Theorem \ref{thm:add-del}
and the fact that the sum of the exponents of a
free arrangement is the cardinality of the arrangement
(\cite[Thm.\ 4.23]{orlikterao:arrangements})
give a necessary condition for the freeness 
of these subarrangements in terms of 
the cardinalities
$\vert(\CA\setminus \{H_1,\ldots,H_l\})^{H_{l+1}}\vert$.
More precisely,  if $\{b_1,b_2,b_3,b_4\}$ is the set of 
exponents of $\CA\setminus \{H_1,\ldots,H_l\}$, then the 
condition is that 
\begin{equation}
\label{eq:cond}
\vert(\CA\setminus \{H_1,\ldots,H_l\})^{H_{l+1}}\vert = b_i + b_j + b_k
\quad \text{for } \{i,j,k\} \subset \{1, \ldots, 4\}.
\end{equation} 
If free, the new possible exponents 
of $\CA\setminus \{H_1,\ldots,H_{l+1}\}$ in these cases are
$\{b_1,b_2,b_3,b_4-1\}$, $\{b_1,b_2,b_3-1,b_4\}$, $\{b_1,b_2-1,b_3,b_4\}$, or $\{b_1-1,b_2,b_3,b_4\}$. 
Note that  
this arrangement might actually fail to be free. 

We determine all subarrangements $\CB = \{H_1,\ldots,H_n\}$ of $\CA$ 
for fixed  cardinality $n$ and count how many of them satisfy the 
necessary condition \eqref{eq:cond}. 
In Tables \ref{tablenif1} -- \ref{tablenif3} we give
the results of these computations in all three cases. 
More precisely, 
in the $n$-th row of each table
we give the number $N$ of all  
subarrangements $\CB$ of $\CA$ with $n$  hyperplanes  
which admit an ordering satisfying the 
necessary condition \eqref{eq:cond}.  
In the last column we list which 
possible exponents of $\CA \setminus \CB$ might occur in these cases. 
The computations were done with a 
breadth-first search, i.e.\ we first computed all cases in one row before proceeding 
to the next.
\end{proof}

\begin{table}[h] 
\begin{tabular}[t]{rrl}
$|\CB|$  & $N$    & $\exp (\CA \setminus \CB)$\\
\hline
\hline
1   &  12       & $\{1, 7, 9, 10\}$\\
2   &  48       & $\{1, 7, 9, 9\}$\\
3   &  48       & $\{1, 7, 8, 9\}$\\
4   & 144       & $\{1, 7, 8, 8\}$\\
5   &  72       & $\{1, 7, 7, 8\}$\\
6   &  12       & $\{1, 6, 7, 8\}$\\
7   &  48       & $\{1, 6, 7, 7\}$\\
8   &  72       & $\{1, 6, 6, 7\}$\\
9   &  48       & $\{1, 5, 6, 7\}$\\
10  &  12       & $\{1, 4, 6, 7\}$\\
11  &   0       &                \\
\hline
\end{tabular}
\medskip
\caption{The number of $\CB \subset \CA = (G_{33},A_1)$ 
satisfying \eqref{eq:cond}}
\label{tablenif1}
\end{table}

\begin{table}[h] 
\begin{tabular}[t]{rrl}
$|\CB|$  & $N$    &  $\exp(\CA \setminus \CB)$\\
\hline
\hline
1 &    12 &$\{1, 13, 19, 22\}$\\
2 &    66 &$\{1, 13, 19, 21\}$\\
3 &   204 &$\{1, 13, 19, 20\}$\\
4 &   351 &$\{1, 13, 19, 19\}$\\
5 &   288 &$\{1, 13, 18, 19\}$\\
6 &   432 &$\{1, 13, 17, 19\}$, $\{1, 13, 18, 18\}$\\
7 &   384 &$\{1, 13, 17, 18\}$, $\{1, 13, 16, 19\}$\\
8 &   351 &$\{1, 13, 17, 17\}$, $\{1, 13, 15, 19\}, \{1, 13, 16, 18\}$\\
9 &   172 &$\{1, 13, 16, 17\}$, $\{1, 13, 15, 18\}$\\
10&    98 &$\{1, 13, 16, 16\}$, $\{1, 13, 15, 17\}$\\
11&    28 &$\{1, 13, 15, 16\}$\\
12&     1 &$\{1, 13, 15, 15\}$\\
13&   0   &                \\
\hline
\end{tabular}
\medskip
\caption{The number of $\CB \subset \CA = (G_{34},A_1^2)$ 
satisfying \eqref{eq:cond}}
\label{tablenif2}
\end{table}

\begin{table}[h] 
\begin{tabular}[t]{rrl}
$|\CB|$  & $N$    & $\exp(\CA \setminus \CB)$\\
\hline
\hline
1&      18& $\{1, 13, 16, 18\}$\\
2&     126& $\{1, 13, 16, 17\}$\\
3&     402& $\{1, 13, 16, 16\}$\\
4&     612& $\{1, 13, 15, 16\}$\\
5&    1584& $\{1, 13, 15, 15\}$, $\{1, 13, 14, 16\}$\\
6&    2910& $\{1, 13, 13, 16\}$, $\{1, 13, 14, 15\}$\\
7&    6030& $\{1, 12, 13, 16\}$, $\{1, 13, 13, 15\}$, $\{1, 13, 14, 14\}$\\
8&    8865& $\{1, 13, 13, 14\}$, $\{1, 11, 13, 16\}$, $\{1, 12, 13, 15\}$\\
9&   12764& $\{1, 13, 13, 13\}$, $\{1, 12, 13, 14\}$, $\{1, 11, 13, 15\}$, $\{1, 10, 13, 16\}$\\
10&  11358& $\{1, 10, 13, 15\}$, $\{1, 12, 13, 13\}$, $\{1, 11, 13, 14\}$\\
11&   8982& $\{1, 12, 12, 13\}$, $\{1, 10, 13, 14\}$, $\{1, 11, 13, 13\}$\\
12&   8430& $\{1, 12, 12, 12\}$, $\{1, 11, 12, 13\}$, $\{1, 10, 13, 13\}$\\
13&   4491& $\{1, 11, 11, 13\}$, $\{1, 11, 12, 12\}$, $\{1, 10, 12, 13\}$\\
14&   2223& $\{1, 11, 11, 12\}$, $\{1, 10, 11, 13\}$, $\{1, 10, 12, 12\}$\\
15&   1068& $\{1, 10, 10, 13\}$, $\{1, 10, 11, 12\}$, $\{1, 11, 11, 11\}$\\
16&    261& $\{1, 10, 10, 12\}$, $\{1, 10, 11, 11\}$\\
17&    126& $\{1, 10, 10, 11\}$\\
18&     37& $\{1, 10, 10, 10\}$, $\{1, 9, 10, 11\}$ \\
19&      0&                \\
\hline
\end{tabular}
\medskip
\caption{The number of $\CB \subset \CA = (G_{34},A_2)$ 
satisfying \eqref{eq:cond}}
\label{tablenif3}
\end{table}

\begin{remark}
\label{rem:computations}
In order 
to establish the results of Lemmas \ref{lem:indfree-exeptional} and  \ref{lem:notindfree-exeptional}, 
we first use the functionality for complex reflection groups 
provided by the   \CHEVIE\ package in   \GAP\ 
(and some \GAP\ code by J.~Michel)
(see \cite{gap3} and \cite{chevie})
in order to obtain explicit 
linear functionals $\alpha$ defining the hyperplanes 
$H = \ker \alpha$ of the reflection arrangement
$\CA(W)$ and the relevant restrictions $\CA(W)^X$.
 
We then use the module theoretic functionality of
the   \Singular\ computer algebra system (cf.~\cite{singular})
to determine the induction tables in 
Tables \ref{table3} -- \ref{table8}, cf.\ 
Remark  \ref{rem:indtable}.

In addition we utilize 
the functionality of \Sage\ to 
compile the data in Tables \ref{tablenif1} -- \ref{tablenif3},
showing that  the $4$-dimensional restrictions 
of Lemma  \ref{lem:notindfree-exeptional} are not inductively 
free,  \cite{sage}.
\end{remark}

\subsection{Proof of Theorem \ref{thm:heredfree}}

The reverse implication of Theorem \ref{thm:heredfree} is obvious.
Thanks to \cite[Cor.~2.12]{hogeroehrle:inductivelyfree} and 
\eqref{eq:restrproduct},
the question of hereditary inductive freeness of a given restriction
of a product of arrangements reduces to the case
when $\CA$ is irreducible.

So suppose that $\CA = \CA(W)$ and $X \in L(\CA)$ are as in 
Theorem \ref{thm:indfree} so that $W$ is irreducible and
$\CA^X$ is inductively free.
If $\CA$ is as in part (i) of Theorem \ref{thm:indfree},
then, noting that if $Y \in L(\CA^X)$, then $(\CA^X)^Y = \CA^Y$, 
the result follows from 
Theorem \ref{thm:indfree1}(ii).
If $\CA$ is as in 
Theorem \ref{thm:indfree}(iii), then Lemmas \ref{lem:rank1and2} and
\ref{lem:hif} show that
$\CA^X$ is hereditarily inductively free.

Finally, suppose that $W = G(r,r,\ell)$ and $\CA(W)^X = \CA_p^k(r)$ 
for $p-2 \le k \le p$.
It follows from the fact that $p-2 \le k \le p$ and 
the restrictions in Table \ref{table2} that for any 
$H \in \CA_p^k(r)$, the restriction $(\CA_p^k(r))^H$ is of the form
$\CA_{p-1}^{k-1}(r)$, $\CA_{p-1}^k(r)$, $\CA_{p-1}^{k+1}(r)$,  or $\CA_{p-1}^{p-1}(r)$.  
It thus follows by induction and 
Theorem \ref{thm:indfree1}(ii) for $\CA(G(r,1,p)) \cong \CA_{p-1}^{p-1}(r)$
that $\CA_p^k(r)$ is hereditarily inductively free.

Our final result
follows readily from Theorems \ref{thm:indfree1}, 
\ref{thm:indfree}, \ref{thm:akl}(ii) and  
\cite[Prop.\ 2.14]{orliksolomon:unitaryreflectiongroups}
(cf.~\cite[Prop.~6.84]{orlikterao:arrangements}):

\begin{corollary}
\label{cor:recfree}
For $W$ a finite, irreducible, complex reflection group,  
let $\CA = \CA(W)$ be its reflection arrangement.
Then the following hold:
\begin{itemize}
\item[(i)] 
Suppose that 
$W$ is not isomorphic to  
$G_{24}, G_{27}, G_{29}, G_{31}, G_{33}$, or $G_{34}$.
Then $\CA^X$ is recursively free for any $X \in L(\CA)$.
In particular, $\CA$ is recursively free.
\item[(ii)] 
Suppose that  $W$ is one of 
$G_{24}, G_{27}, G_{29}, G_{31}, G_{33}$, or $G_{34}$
and that $X \in L(\CA) \setminus\{V\}$ with $\dim X \le 3$.
Then $\CA^X$ is recursively free.
\end{itemize}
\end{corollary}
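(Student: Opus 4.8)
The plan is to deduce the corollary purely from the classifications already in hand, together with the elementary observation that inductive freeness implies recursive freeness. First I would record that $\CIF \subseteq \CRF$: the class $\CRF$ satisfies conditions (i) and (ii) of Definition~\ref{def:recfree}, which are exactly the two defining conditions of $\CIF$ in Definition~\ref{def:indfree}, and $\CIF$ is the smallest class subject to these. Consequently every inductively free, and in particular every hereditarily inductively free, arrangement is recursively free.

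For part (i) I would argue as follows. Since $W$ is irreducible and not isomorphic to $G_{24}, G_{27}, G_{29}, G_{31}, G_{33}$, or $G_{34}$, Theorem~\ref{thm:indfree1}(i) leaves exactly two possibilities. If $W$ is not of the form $G(r,r,\ell)$ with $r, \ell \ge 3$, then $\CA = \CA(W)$ is inductively free, hence hereditarily inductively free by Theorem~\ref{thm:indfree1}(ii), so $\CA^X$ is inductively free, and therefore recursively free, for every $X \in L(\CA)$. If instead $W = G(r,r,\ell)$ with $r, \ell \ge 3$, then by \cite[Prop.~2.14]{orliksolomon:unitaryreflectiongroups} (cf.~\cite[Prop.~6.84]{orlikterao:arrangements}) every restriction $\CA^X$ is isomorphic to an intermediate arrangement $\CA^k_p(r)$ with $0 \le k \le p = \dim X$; for $p \le 2$ this is recursively free, being either $\Phi_0$ or inductively free by Lemma~\ref{lem:rank1and2}, and for $p \ge 3$ it is recursively free by Theorem~\ref{thm:akl}(ii). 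In both cases $\CA^X \in \CRF$ for all $X \in L(\CA)$, and the special case $X = V$ yields the final assertion of (i).

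Part (ii) is then immediate: if $W$ is one of $G_{24}, G_{27}, G_{29}, G_{31}, G_{33}$, or $G_{34}$ and $X \in L(\CA) \setminus \{V\}$ with $\dim X \le 3$, then $\CA^X$ is inductively free by Theorem~\ref{thm:indfree}(iii), hence recursively free by the inclusion $\CIF \subseteq \CRF$ noted above.

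I do not expect a genuine obstacle here. The only point that needs to be stated with some care is the structural input that, for $W = G(r,r,\ell)$, every $X \in L(\CA)$ gives a restriction isomorphic to one of the arrangements $\CA^k_p(r)$ (as in \cite{orliksolomon:unitaryreflectiongroups}); once this is granted, the monomial case of (i) is a direct application of Theorem~\ref{thm:akl}(ii), and the remaining assertions are read off from Theorems~\ref{thm:indfree1} and \ref{thm:indfree} together with $\CIF \subseteq \CRF$.
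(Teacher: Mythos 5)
Your proposal is correct and follows essentially the same route as the paper, which simply states that the corollary follows readily from Theorems \ref{thm:indfree1}, \ref{thm:indfree}, \ref{thm:akl}(ii) and \cite[Prop.~2.14]{orliksolomon:unitaryreflectiongroups}; you have merely spelled out that deduction, including the standard inclusion $\CIF \subseteq \CRF$ which the paper uses implicitly.
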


In \cite[Rem.\ 3.7]{cuntzhoge},
Cuntz and the second author showed that  
$\CA(G_{27})$ fails to be recursively free.
This is the first known instance of a free but non-recursively free
arrangement.
It is not known whether the reflection arrangements
of $G_{24}, G_{29}, G_{31}, G_{33}$, and $G_{34}$ are recursively free.


\smallskip 
{\bf Acknowledgments}: 
We acknowledge 
support from the DFG-priority program 
SPP1489 ``Algorithmic and Experimental Methods in
Algebra, Geometry, and Number Theory''.


\bigskip

\bibliographystyle{amsalpha}

\begin{thebibliography}{GHL{\etalchar{+}}96}

\bibitem[Bou68]{bourbaki:groupes} N.~Bourbaki, \emph{\'{E}l\'ements de
    math\'ematique. {G}roupes et alg\`ebres de {L}ie. {C}hapitre {IV}-{VI}},
  Actualit\'es Scientifiques et Industrielles, No.~1337, Hermann, Paris,
  1968.

\bibitem[CH14]{cuntzhoge} M.~Cuntz and  T.~Hoge,
\emph{Free but not Recursively Free Arrangements},
Proc. AMS, to appear.

\bibitem[GHL{\etalchar{+}}96]{chevie} M.~Geck, G.~Hi{\ss}, F.~L{\"u}beck,
  G.~Malle, and G.~Pfeiffer, \emph{{CHEVIE} --- {A} system for computing and
    processing generic character tables}, Appl.~Algebra
  Engrg.~Comm.~Comput.~\textbf{7} (1996), 175--210.

\bibitem[GPS09]{singular} 
G.-M. Greuel, G. Pfister, and H. Sch\"onemann, \emph{Singular 3-1-1}, 
A Computer Algebra System for Polynomial Computations,
Centre for Computer Algebra, University of Kaiserslautern, 2009.


\bibitem[HR13]{hogeroehrle:free} T.~Hoge and G.~R\"ohrle, 
\emph{Reflection arrangements are hereditarily free}, T\^ohoku Math.~J.
~\textbf{65} (2013), no.~3, 313--319.

\bibitem[HR14]{hogeroehrle:inductivelyfree} 
\bysame, 
\emph{On
inductively free reflection arrangements}, J.~Reine u.~Angew.~Math.~to appear;
DOI10.1515/crelle-2013-0022.


\bibitem[JT84]{jambuterao:supersolvable} M.~Jambu and H.~Terao, \emph{Free arrangements
of hyperplanes and supersolvable lattices}, Adv.~in Math.~\textbf{52} (1984), no.~3, 248--258.

\bibitem[OS82]{orliksolomon:unitaryreflectiongroups}
P.~Orlik and L.~Solomon,
\emph{Arrangements Defined by Unitary Reflection Groups},
Math. Ann. \textbf{261}, (1982), 339--357.


\bibitem[OT92]{orlikterao:arrangements} P.~Orlik and H.~Terao,
  \emph{Arrangements of hyperplanes}, Springer-Verlag, 1992.

\bibitem[S{\etalchar{+}}97]{gap3} M.~Sch{\accent127 o}nert et~al.,
  \emph{{GAP} -- {Groups}, {Algorithms}, and {Programming} -- version 3
   release 4}, 
1997.

\bibitem[ST54]{shephardtodd}
G.~C.~Shephard and J.~A.~Todd, 
\emph{Finite unitary reflection groups}.
Canadian J. Math. \textbf{6}, (1954), 274--304. 

\bibitem[S{\etalchar{+}}09]{sage}
W.\thinspace{}A. Stein et~al., \emph{{S}age {M}athematics {S}oftware}, 
The Sage Development Team, 2009, 
\url{http://www.sagemath.org}.

\bibitem[St64]{steinberg}
R.~Steinberg, \emph{Differential equations invariant under finite reflection groups},
Trans. Amer. Math. Soc. \textbf{112} (1964), 392--400.

\bibitem[Ter80]{terao:freeI} H.~Terao, \emph{Arrangements of hyperplanes and
    their freeness i}, J.~Fac.~Sci.~Univ.~Tokyo \textbf{27} (1980),
  293--320.

\end{thebibliography}

\newcommand{\etalchar}[1]{$^{#1}$}
\providecommand{\bysame}{\leavevmode\hbox to3em{\hrulefill}\thinspace}
\providecommand{\MR}{\relax\ifhmode\unskip\space\fi MR }
\providecommand{\MRhref}[2]{%
  \href{http://www.ams.org/mathscinet-getitem?mr=#1}{#2} }
\providecommand{\href}[2]{#2}


\end{document}